\documentclass[preprint,table,a4paper]{article}
\usepackage[a4paper]{geometry}
\usepackage[english]{babel}
\usepackage[utf8x]{inputenc}
\usepackage{amsmath}
\usepackage{amssymb}
\usepackage{amsfonts}
\usepackage{amsmath}
\usepackage{amsthm}
\usepackage{graphicx}
\usepackage{pgfplots}
\usepackage{tabularx}
\usepackage{xcolor}
\usepackage{enumerate}
\usepackage{url,bm,comment}

\DeclareGraphicsExtensions{.pdf,.eps,.png,.jpg}
\usepackage{epstopdf}

\newcommand{\cu}{{\bm i}}
\newcommand{\norm}[1]{\lVert #1 \rVert}

\definecolor{Gray}{gray}{0.95}

\newtheorem{thm}{Theorem}[section]

\newtheorem{lem}[thm]{Lemma}

\newtheorem{rem}[thm]{Remark}
\newtheorem{cor}[thm]{Corollary}
\newtheorem{property}[thm]{Property}

\renewcommand{\leq}{\leqslant}

\renewcommand{\le}{\leqslant}
\renewcommand{\ge}{\geqslant}

\title{Efficient cyclic reduction for QBDs with 
 rank structured blocks\thanks{Work supported by Gruppo Nazionale di Calcolo Scientifico (GNCS) of INdAM, and by the PRA project ``Mathematical models and computational methods for complex networks'' funded by the University of Pisa.}}
 
\author{Dario A. Bini ({\tt bini@dm.unipi.it}),\\
 Dipartimento di Matematica, University of Pisa, Italy
\\
Stefano Massei ({\tt stefano.massei@sns.it}), \\
Scuola Normale Superiore, Pisa, Italy \\
Leonardo Robol  ({\tt leonardo.robol@sns.it}),\\
Scuola Normale Superiore, Pisa, Italy}
\begin{document}\maketitle

  \begin{abstract}
We provide effective algorithms for solving block tridiagonal block Toeplitz systems with $m\times m$ quasiseparable blocks, as well as quadratic matrix equations with $m\times m$ quasiseparable coefficients, based on cyclic reduction and on the technology of rank-structured matrices. The algorithms rely on the exponential decay of the singular values of the off-diagonal submatrices generated by cyclic reduction. We provide a formal proof of this decay in the Markovian framework.    
The results of the numerical experiments that we report confirm a significant speed up over the general algorithms, already starting with the moderately small size $m\approx 10^2$.
  \end{abstract}

  \section{Introduction}
Cyclic reduction (CR)
is an effective tool that can be used for
solving several problems in linear algebra and in polynomial computations \cite{CR}. It has been originally introduced by G.H.~Golub and
R.W.~Hockney in the mid 1960s \cite{hockney,buzbee},  for the numerical solution of block tridiagonal
linear systems stemming from the finite differences solution of
elliptic problems, and has been generalized to solve nonlinear matrix
equations associated with matrix power series with applications to queuing problems, Markov chains and spectral decomposition of polynomials. In fact, an important
application of CR concerns the computation of the minimal nonnegative
solution of the matrix equation $X=A_{-1}+A_0 X+A_1X^2$, encountered in
Quasi Birth-Death (QBD) Markov chains, where $A_{-1}$, $A_0$, and
$A_1$ are given $m\times m$ nonnegative matrices such that
$A_{-1}+A_0+A_1$ is irreducible and stochastic and where $X$ is the matrix unknown \cite{SMC,CR}.
The computation of the solution $X$ allows to recover
the steady state vector $\pi$ of the Markov
chain. 

Rewriting the matrix equation as $A_{-1}+(A_0-I)X+A_1X^2=0$, 
CR computes four sequences of matrices, $A_i^{(h)}$, $i=-1,0,1$ and $\widehat A_0^{(h)}$, according to the following equations
\begin{equation}\label{eq:cr}
  \begin{split}
  A_1^{(h+1)} &= -A_1^{(h)}\ S^{(h)}\ A_1^{(h)},\qquad   S^{(h)} =(A_0^{(h)}-I)^{-1}\\
  A_0^{(h+1)} &= A_0^{(h)} - A_{1}^{(h)}\ S^{(h)}\ A_{-1}^{(h)}\ -\ A_{-1}^{(h)}\ S^{(h)}\ A_{1}^{(h)},\\
  A_{-1}^{(h+1)} &= -A_{-1}^{(h)}\ S^{(h)}\ A_{-1}^{(h)},\quad
  \widehat A_0^{(h+1)} = \widehat A_0^{(h)} - A_{1}^{(h)}\ S^{(h)}\ A_{-1}^{(h)},
  \end{split}
  \end{equation}
for $h=0,1,\ldots$,  with $A_i^{(0)} = A_i$, $i=-1,0,1$ and $\widehat A_0^{(0)}=A_0-I$. It can be proved,
in the context of Markov chains, \cite{SMC} that
the sequence $-(\widehat A_0^{(h)})^{-1}A_{-1}$ converges to the minimal nonnegative solution $G$ of the matrix equation. 
 
  If the QBD process is not null recurrent, applicability and quadratic convergence of the algorithm are guaranteed \cite{SMC}[Theorems 7.5, 7.6]. In the null recurrent case, it has been proved in \cite{guo2002convergence} that convergence is linear with factor $\frac 1 2$.

  Without any further assumption on the structure of the blocks, each
  step of CR requires a small number of matrix multiplications 
  and one matrix inversion for the resulting computational cost of $O(m^3)$
  arithmetic operations (ops) per step.  On the other hand, there are several
  models from the applications in which the blocks $A_i$ exhibit
  special structures.  In order to decrease the computational
  complexity of the iterations, variations of CR
  which exploit these structures have been proposed, see for instance \cite{BTCR}, \cite{PerVan}, \cite{LRCR}.

Here, we are interested in analysing the case where the blocks $A_i$
are quasiseparable matrices. That is, the case where the {\em off-diagonal submatrices} of $A_{-1},~A_0$ and $A_1$, strictly contained in the upper or in the lower triangular part, have low rank with respect to $m$. The maximum of the ranks of the off-diagonal submatrices is called {\em quasiseparable rank} and a matrix with quasiseparable rank $k$ is called $k$-quasiseparable. Observe that $k$-quasiseparable matrices include banded matrices. These structures are
encountered in wide and important classes of applications like, 
 bidimensional random walks
\cite{Miya},  the Jackson tandem queue model \cite{tandem} and other QBD processes, or, for instance,
in the finite differences discretization of 
elliptic PDEs.

Our goal is to design a version of CR which exploits the rank structures of
the blocks $A_i$ and which can be implemented at a substantially lower
cost. This way, we may arrive at designing effective solvers both for block
tridiagonal block Toeplitz systems and for the quadratic matrix
equations encountered in QBD Markov chains.
Indeed, a way to reach this goal is to find out  if some structure of the blocks $A_i^{(h)}$ is maintained during the CR steps \eqref{eq:cr}, and then to try to exploit this structure in order to design an effective implementation of CR.

Looking at the iterative scheme \eqref{eq:cr},
one can find out that the quasiseparable rank can grow exponentially at each step. Despite that, plotting the 
singular values of the off-diagonal blocks of the
matrices $A_i^{(h)}$ shows an interesting
behaviour as reported in Figure \ref{fig:decay}.
For a randomly generated QBD process with tridiagonal blocks of size $1600$, the singular values of an off-diagonal submatrix of size $799\times 800$ in $A_0^{(h)}$ have an exponential decay in the first 20 steps of CR.

   \begin{figure}[!ht]\label{fig:decay}
\centering
\includegraphics[height=0.3\textheight]{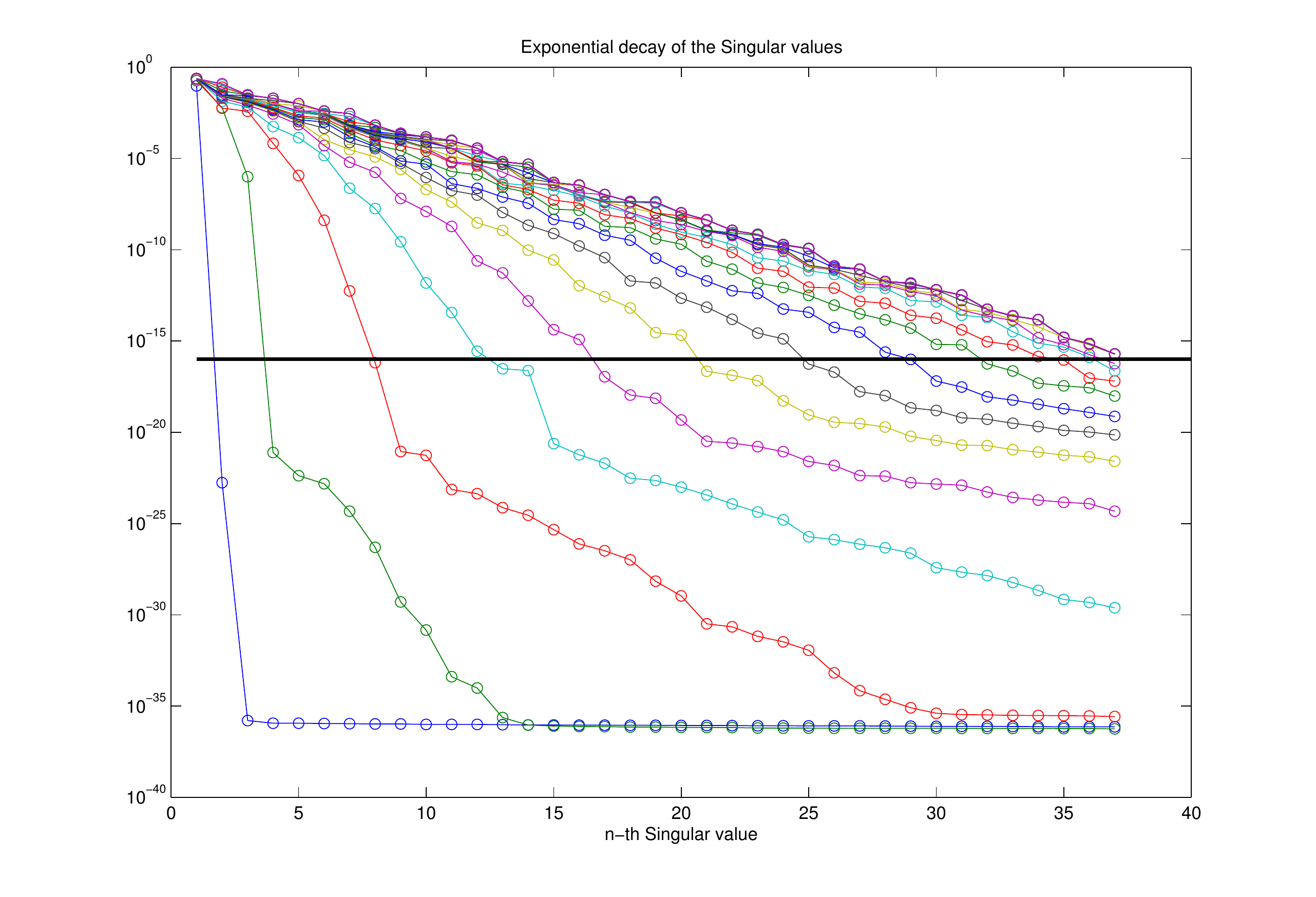}
\caption{Log-scale plot of the largest singular values of the largest south-western  submatrix of $A_0$ contained in the lower triangular part, for $m=1600$. The horizontal line denotes the machine precision threshold. Matrices are randomly generated so that
$A_i\ge 0$ are tridiagonal matrices and $A_{-1}+A_0+I+A_1$ is stochastic.}
\end{figure}

It is evident that, even though the number of nonzero singular values grows at each step of CR, the number of singular values above the machine precision -- denoted by a horizontal line in Figure \ref{fig:decay}-- is bounded by a moderate constant.
Moreover, the singular values seem to stay below a straight-line which constitutes an asymptotic bound. That is, they get closer to this line as $h\to\infty$. The logarithm scale suggest that the computed singular values $\sigma^{(h)}_i$ decay exponentially with $i$ and the basis of the exponential grows with $h$ but has a limit less than 1.

We will prove this asymptotic property relying on the technology of rank-structured matrices and relate the basis of the exponential decay to the width of the domain of analiticity of the matrix function $\varphi(z)^{-1}$ for
$
\varphi(z)=-z^{-1}A_{-1}+I-A_0-z A_1
$.

The paper is organized as follows. In Section~\ref{sec:main-properties} we 
recall some useful properties of CR. 
In Section \ref{sec:technical} we prove some preliminary lemmas for bounding the singular values of sums,
products and inverses of matrices. 
Section \ref{sec:exponential-decay} contains the proof of the main result concerning the exponential
decay of the singular values. 
In Section \ref{sec:hmatrices} we describe our algorithm which implements CR relying on the package H2Lib \cite{h2lib}, concerning $\mathcal H$-matrices, and the related code together with the results of the numerical experiments. Finally, 
in Section~\ref{sec:conclusions}, we give 
some concluding comments and possible future developments. 

\section{Main properties of cyclic reduction}
  \label{sec:main-properties}
We recall a functional interpretation of cyclic reduction introduced in the Markov chains framework \cite{SMC}, \cite{CR} to prove applicability and convergence properties.

Associate the matrices $A^{(h)}_i$, $i=-1,0,1$ defined in \eqref{eq:cr} with  
the matrix Laurent polynomial
$
\varphi^{(h)}(z):=-z^{-1}A_{-1}^{(h)}+ (I-A_0^{(h)})-zA_1^{(h)}
$,
where $\varphi^{(0)}(z)=\varphi(z)=z^{-1}A_{-1}+(I-A_0)-zA_1$,
and define
the matrix rational function 
$\psi^{(h)}(z)=\varphi^{(h)}(z)^{-1}$. The following property holds
\[
\begin{cases}
\psi^{(0)}(z):=\psi(z),\\
\psi^{(h+1)}(z^2):=\frac{1}{2}(\psi^{(h)}(z)+\psi^{(h)}(-z)).
\end{cases}
\]

In particular, making explicit the recurrence relation in the sequence $\{\psi^{(h)}\}$, we find that 
$\psi^{(h)}(z^{2^h})= \frac 1{2^h}\sum_{j=0}^{2^h-1}\psi^{(0)}(\omega_N^j z)
$
where $\omega_N=e^{\frac{2\pi}{N}\cu}$ is a principal $N$-th root of the unity for $N=2^h$, and $\cu$ denotes the imaginary unit,
so that
\begin{equation}\label{radix}
\varphi^{(h)}(z^{2^h})=   \left(
  \frac{1}{2^h}\sum_{j=0}^{2^h-1}\psi^{(0)}(\omega_N^j z)
  \right)^{-1}.
\end{equation}

Observe that in the case where $A_{-1}$, $A_0$ and $A_1$ are tridiagonal, then $\varphi(z)$ is tridiagonal as well, so that for any value of $z$ such that $\det\varphi(z)\ne 0$, the matrix $\psi(z)$ is semiseparable, that is, $\hbox{tril}(\psi(z))=\hbox{tril}(L)$, $\hbox{triu}(\psi(z))=\hbox{triu}(U)$, where $L$ and $U$ are matrices of rank 1 \cite{vanbarel:book1}.

Relying on this property, in the next section we will show that $\psi_h(z)$, as well as the blocks $A_{-1}^{(h)}$, $A_0^{(h)}$ and $A_1^{(h)}$, has off-diagonal submatrices with singular values which decay exponentially.  

  \section{Singular values of sums, products and inverses}
    \label{sec:technical}
    In this section, we recall some basic facts on the singular values decomposition (SVD) and provide some technical lemmas.
Let us denote by $\sigma_j(A)$
 the $j$-th singular value of the $m\times n$ matrix $A$ sorted in 
 non-ascending order and by $A=U\Sigma V^*$, $\Sigma=\hbox{diag}(\sigma_1,\ldots,\sigma_p)$, $p=\min\{m,n\}$,
     the SVD of $A$  where $U$ and $V$ are unitary and $V^*$ denotes the transpose conjugate of $V$. 
Moreover, denote by $u_i$ and $v_i$ the $i$th column of $U$ and $V$, respectively.
We recall the following well-known property concerning the SVD of a matrix $A$.

\begin{property}\label{prop} For any matrix $A$
the function $f(X)=\|A-X\|_2$ takes its minimum over the class of matrices of rank $\ell-1$ for $X=\sum_{i=1}^{\ell-1} \sigma_iu_iv_i^*$ and the value of the minimum is exactly $\sigma_{\ell}$.
\end{property}

\subsection{Some technical lemmas}    
Let $\| A\|_2$ be the Euclidean norm of $A$ and $\mu_2(A)=\|A\|_2\cdot\|A^{-1}\|_2$ be the condition number of $A$.   The following result relates the singular values of the matrices $A$, $B$ and $C=AB$.
    
    \begin{lem} \label{lem:dario1}
      Consider two matrices $A \in \mathbb{C}^{m \times n}$, 
      $B\in\mathbb{C}^{n \times n}$, such that
      $B$ is invertible. Then
      \[\begin{split}
       & \frac{\sigma_j(A)}{\lVert B^{-1} \rVert_2} \leq \sigma_j(AB) \leq \lVert B \rVert_2 \cdot \sigma_j (A), \qquad
        \frac{\sigma_j(A)}{\lVert B^{-1} \rVert_2} \leq \sigma_j(BA^*) \leq \lVert B \rVert_2 \cdot \sigma_j (A), \\
 \end{split}
      \]
    \end{lem}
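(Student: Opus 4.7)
The plan is to derive both inequalities from Property~\ref{prop} (the Schmidt--Eckart--Young characterisation) together with submultiplicativity of the spectral norm.

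For the upper bound $\sigma_j(AB)\le\|B\|_2\,\sigma_j(A)$, I would first invoke Property~\ref{prop} to pick a rank-$(j-1)$ matrix $X_{j-1}=\sum_{i=1}^{j-1}\sigma_i(A)u_iv_i^*$ realising $\|A-X_{j-1}\|_2=\sigma_j(A)$. The matrix $X_{j-1}B$ still has rank at most $j-1$, so Property~\ref{prop} applied to $AB$ gives
\[
\sigma_j(AB)\le\|AB-X_{j-1}B\|_2=\|(A-X_{j-1})B\|_2\le\|A-X_{j-1}\|_2\,\|B\|_2=\sigma_j(A)\,\|B\|_2.
\]

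For the lower bound, I would exploit invertibility of $B$ by applying the upper bound just proved to the pair $(AB,B^{-1})$: since $(AB)B^{-1}=A$, one obtains
\[
\sigma_j(A)=\sigma_j\bigl((AB)B^{-1}\bigr)\le\|B^{-1}\|_2\,\sigma_j(AB),
\]
which rearranges to the claimed inequality $\sigma_j(AB)\ge\sigma_j(A)/\|B^{-1}\|_2$.

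To handle the second chain of inequalities for $\sigma_j(BA^*)$, I would use the elementary fact that a matrix and its conjugate transpose share the same singular values, so $\sigma_j(BA^*)=\sigma_j(AB^*)$. Then I would apply the already established bounds with $B$ replaced by $B^*$, noting $\|B^*\|_2=\|B\|_2$ and $\|B^{-*}\|_2=\|B^{-1}\|_2$. There is no genuine obstacle here; the only point deserving a line of comment is the use of invertibility of $B$ (needed so that both $B^{-1}$ in the lower bound and the identification $\sigma_j(BA^*)=\sigma_j(AB^*)$ make sense), and the observation that the product of a rank-$(j-1)$ matrix with any matrix remains of rank at most $j-1$, which is what lets Property~\ref{prop} transfer cleanly from $A$ to $AB$.
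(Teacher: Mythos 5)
Your proof is correct, but it follows a genuinely different route from the paper's. The paper works with the variational (minimax) characterisation of $\sigma_j(M)^2$ as the $j$-th eigenvalue of $M^*M$: after reducing to $\sigma_j(\Sigma_A Q \Sigma_B)$ with $Q$ unitary, it substitutes $y=\Sigma_B x$ and sandwiches the Rayleigh quotient using $\frac{x^*x}{\|B^{-1}\|_2^2}\le y^*y\le \|B\|_2^2\,x^*x$, which delivers the upper and lower bounds in one stroke. You instead rely on Property~\ref{prop} (Eckart--Young): the best rank-$(j-1)$ approximant of $A$, post-multiplied by $B$, is a rank-$\le(j-1)$ competitor for $AB$, giving the upper bound by submultiplicativity; the lower bound then follows by applying that same upper bound to the pair $(AB,B^{-1})$, and the $BA^*$ chain by conjugate transposition. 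Your argument is arguably the more elementary of the two (no minimax theorem, no SVD manipulation) and it reuses exactly the tool the paper has already stated; the paper's argument has the advantage of producing both inequalities simultaneously from a single two-sided estimate and of making the role of the unitary factors explicit. One cosmetic remark: the identity $\sigma_j(BA^*)=\sigma_j(AB^*)$ holds for any $B$, invertible or not, since a matrix and its conjugate transpose have the same singular values; invertibility is needed only for the lower bounds. This does not affect the validity of your proof.
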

    
    \begin{proof}
      We prove only the first statement since the other follows directly from it.
      Consider the SVD decompositions 
      $
        A = U_A \Sigma_A V_A^* $,  
        $B = U_B \Sigma_B V_B^*. 
      $
      Recall that the singular values of a generic matrix 
      $M \in \mathbb{C}^{m \times n}$
      are the square roots of the eigenvalues of $M^* M$, so that by the minimax theorem, 
      we can write 
      \[
        \sigma_j(M)^2 = \max_{\substack{\dim(V) = j \\ V \subseteq \mathbb{C}^{n}}}\, 
\min_{\substack{x \in V\\ x\ne 0}} \frac{x^* M^* M x}{x^* x}. 
      \]
      Now note that $AB = U_A\Sigma_AV_A^* U_B \Sigma_B V^*$ 
      and since unitary matrices do not change the singular
      values we have $\sigma_j(AB) = \sigma_j(\Sigma_A Q \Sigma_B)$ where $Q = V_A^* U_B$. 
      Thus, we can express $\sigma_j(AB)^2$ as 
      \[
        \sigma_j(AB)^2 = \max_{\substack{\dim(V) = j \\ V \subseteq \mathbb{C}^{n}}} \min_{x \in V} \frac{x^* \Sigma_B^* Q^* \Sigma_A^* \Sigma_A Q \Sigma_B x}{x^* x} = 
        \max_{\substack{\dim(V) = j \\ V \subseteq \mathbb{C}^{n}}} \min_{x \in V} 
          \frac{ (\Sigma_B x)^* Q^* \Sigma_A^2 Q (\Sigma_B x)}{x^* x}. 
      \]
      By setting $y = \Sigma_B x$ and recalling that
      $\Sigma_B$ is invertible by hypothesis we have
      \[
        \sigma_j(AB)^2 = \max_{\substack{\dim(V) = j \\ V \subseteq \mathbb{C}^{n}}} \min_{y \in V} 
          \frac{y^* Q^* \Sigma_A^2 Q y}{y^* y}\cdot 
          \frac{y^* y}{x^* x}
      \]
      so that, by using the fact that $Q$ is unitary 
      and that $\frac{x^* x}{\lVert B^{-1} \rVert_2^2} \leq y^* y
        \leq \lVert B \rVert_2^2 x^* x$,
      we obtain
      $
        \frac{\sigma_j(A)^2}{\lVert B^{-1} \rVert_2^2} = \frac{\sigma_j(\Sigma_A)^2}{\lVert B^{-1} \rVert_2^2} \leq \sigma_j(AB)^2 \leq \lVert B \rVert_2^2 \cdot \sigma_j(\Sigma_A)^2 = 
        \lVert B \rVert_2^2 \cdot \sigma_j(A)^2. 
      $
    \end{proof}
    
    The following lemma relates the singular values of an off-diagonal submatrix of the inverse of a given matrix $A$ with those of the corresponding submatrix of $A$.
    
    \begin{lem}\label{lem:dario2}
    Let $\mathcal{A}\in\mathbb{C}^{n\times n}$ be an invertible matrix and  consider the block partitioning
    \[
    \mathcal{A}=\left(\begin{array}{cc}
    A&B\\
    C&D
    \end{array}\right),\ \ 
    \mathcal{A}^{-1}=\left(\begin{array}{cc}
    \tilde{A}&\tilde{B}\\
    \tilde{C}&\tilde{D}
    \end{array}\right),
    \]
    where $A$ and $\tilde A$ are $i \times i$
    matrices for $2\le i\le n-1$. We have the following properties
    \begin{enumerate}
    \item \label{en:first} If $D$ is invertible then 
    $\frac{1}{\lVert D \rVert_2 \lVert S_D \rVert_2} \sigma_j(C) \leq 
         \sigma_j(\tilde{C})
      \leq  
         \lVert D^{-1} \rVert_2 \cdot \lVert S_D^{-1} \rVert_2\cdot \sigma_j({C})
    $,
where $S_D=A-BD^{-1}C$ is the Schur complement of $D$.
    \item \label{en:second} If $A$ is invertible then
$
      \frac{1}{\lVert A \rVert_2 \lVert S_A \rVert_2} \sigma_j(C) \leq 
         \sigma_j(\tilde{C})
      \leq  
         \lVert A^{-1} \rVert_2 \cdot \lVert S_A^{-1} \rVert_2\cdot \sigma_j({C})
 $,
         where $S_A=D-CA^{-1}B$ is the Schur complement of $A$.
    \end{enumerate}
    \end{lem}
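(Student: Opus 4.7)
The plan is to obtain a closed-form expression for $\tilde C$ through a block $LDU$ factorization of $\mathcal A$, and then to apply Lemma~\ref{lem:dario1} twice to propagate the bounds through the two invertible factors that appear on either side of $C$.

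For part~\ref{en:first}, since $D$ is invertible we have the factorization
\[
\mathcal A = \begin{pmatrix} I & BD^{-1} \\ 0 & I \end{pmatrix}\begin{pmatrix} S_D & 0 \\ 0 & D \end{pmatrix}\begin{pmatrix} I & 0 \\ D^{-1}C & I \end{pmatrix},
\]
and inverting the three (trivially invertible) factors and reading off the lower-left block yields
\[
\tilde C = -D^{-1}\, C\, S_D^{-1}.
\]
Since singular values are invariant under a sign change, I would bound $\sigma_j(D^{-1}CS_D^{-1})$ by two successive uses of Lemma~\ref{lem:dario1}: first with right factor $S_D^{-1}$ (first form of the lemma), and then with left factor $D^{-1}$ (second form, or equivalently the first form applied after transposition, which does not affect singular values). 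Concatenating
\[
\frac{\sigma_j(C)}{\|S_D\|_2} \le \sigma_j(CS_D^{-1}) \le \|S_D^{-1}\|_2\,\sigma_j(C), \qquad \frac{\sigma_j(CS_D^{-1})}{\|D\|_2} \le \sigma_j(D^{-1}CS_D^{-1}) \le \|D^{-1}\|_2\,\sigma_j(CS_D^{-1})
\]
reproduces exactly the stated inequalities.

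Part~\ref{en:second} is handled by the mirror factorization
\[
\mathcal A = \begin{pmatrix} I & 0 \\ CA^{-1} & I \end{pmatrix}\begin{pmatrix} A & 0 \\ 0 & S_A \end{pmatrix}\begin{pmatrix} I & A^{-1}B \\ 0 & I \end{pmatrix},
\]
which is available precisely because $A$ is invertible. Inversion yields $\tilde C = -S_A^{-1}\, C\, A^{-1}$, and the same two-step application of Lemma~\ref{lem:dario1}, now with right factor $A^{-1}$ and left factor $S_A^{-1}$, delivers the claimed chain of inequalities.

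No real obstacle is expected: once the two $LDU$ factorizations are written down, the argument is purely algebraic. The only points requiring mild care are (i) matching the correct variant of Lemma~\ref{lem:dario1} to the side on which each invertible factor acts on $C$, and (ii) remembering that what surrounds $C$ in $\tilde C$ is the inverse of the appropriate Schur complement rather than the inverse of $A$ or $D$ themselves.
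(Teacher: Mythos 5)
Your proposal is correct and follows essentially the same route as the paper: the paper likewise reads off $\tilde C=-D^{-1}CS_D^{-1}$ (resp.\ $\tilde C=-S_A^{-1}CA^{-1}$) from the standard block-inversion formula and then applies Lemma~\ref{lem:dario1} twice, once for each invertible factor flanking $C$. The only cosmetic difference is that you derive the inversion formula from the block $LDU$ factorization while the paper states it directly.
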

    
    \begin{proof}
    Let us consider part~\ref{en:first}. 
    If $D$ is invertible we can write
    \[
    \left(\begin{array}{cc}
    A&B\\
    C&D
    \end{array}\right)^{-1}=
    \left(\begin{array}{cc}
      S_D^{-1}&S_D^{-1}BD^{-1}\\
    -D^{-1}CS_D^{-1}&D^{-1}+D^{-1}CS_D^{-1}BD^{-1}
    \end{array}\right)
    \]
    and in particular we have $\tilde{C}=-D^{-1}CS_D^{-1}$.  
    Repeatedly applying Lemma~\ref{lem:dario1} to $\tilde C$
    yields
    $
      \frac{1}{\norm{D}_2} \sigma_j(CS_D^{-1}) \leq 
        \sigma_j(\tilde C) \leq
        \norm{D^{-1}}_2 \cdot \sigma_j(CS_D^{-1})
    $
    and eventually
    $
      \frac{1}{\norm{S_D}_2} \sigma_j(C) \leq
        \sigma_j(CS_D^{-1}) \leq 
        \norm{S_D^{-1}}_2 \cdot \sigma_j(C). 
   $
    Combining these inequalities gives us the thesis. 
    For proving part~\ref{en:second},  we  can proceed  in the same manner relying on the inversion formula
\[
\left(\begin{array}{cc}
A&B\\
C&D
\end{array}\right)^{-1}=
\left(\begin{array}{cc}
A^{-1}+A^{-1}BS_A^{-1}CA^{-1}&-A^{-1}BS_A^{-1}\\
-S_A^{-1}CA^{-1}&S_A^{-1}
\end{array}\right).
\] 
    \end{proof}

    \begin{lem}\label{dyads}
    Let $A=\sum_{i=-\infty}^{+\infty} A_i$ 
    and $A^+=\sum_{i=0}^{+\infty} A_i$  
    where $A_i\in\mathbb{C}^{m\times n}$ have rank at most $k$ and suppose that $\left\| A_i\right\|_2\leq Me^{-\alpha |i|}$. Then 
    $
    \sigma_j(A)\leq \frac{2M}{1-e^{-\alpha}}\cdot e^{-\alpha  \frac{j-k}{2k}}$,
   $ \sigma_j(A^+)\leq \frac{M}{1-e^{-\alpha}}\cdot e^{-\alpha  \frac{j -k}{k}}
    $.
    \end{lem}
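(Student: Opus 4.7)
The key tool is Property~\ref{prop} (Eckart--Young): for any matrix $X$ of rank at most $\ell-1$, we have $\sigma_\ell(M)\le\|M-X\|_2$. The natural approximants are the truncated partial sums, whose rank is controlled by the number of terms times $k$, and whose error is controlled by the geometric tail bound on $\|A_i\|_2$. The whole argument is really a balancing act between these two sources of control.

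I would start with the one-sided statement for $A^+$. Set $S_N=\sum_{i=0}^{N-1}A_i$, which has rank at most $Nk$. The triangle inequality and the hypothesis give
\[
  \|A^+-S_N\|_2\le\sum_{i=N}^{\infty}Me^{-\alpha i}=\frac{M\,e^{-\alpha N}}{1-e^{-\alpha}},
\]
so by Property~\ref{prop}, $\sigma_{Nk+1}(A^+)\le\frac{M e^{-\alpha N}}{1-e^{-\alpha}}$, and hence $\sigma_j(A^+)\le\frac{M e^{-\alpha N}}{1-e^{-\alpha}}$ whenever $Nk+1\le j$, i.e.\ $N\le(j-1)/k$. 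To get the exponent $(j-k)/k$ in the stated bound, I would choose $N=\lfloor(j-1)/k\rfloor$ and verify the two-sided inequality $(j-k)/k\le N\le(j-1)/k$, which holds because writing $j-1=qk+r$ with $0\le r<k$ shows $N=q\ge(j-1)/k-(k-1)/k=(j-k)/k$. Substituting yields the second bound.

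For the two-sided statement, I would use the symmetric partial sum $T_N=\sum_{|i|\le N-1}A_i$, which has rank at most $(2N-1)k$, while the residual now has both a positive and a negative tail, so
\[
  \|A-T_N\|_2\le 2\sum_{i=N}^{\infty}Me^{-\alpha i}=\frac{2M\,e^{-\alpha N}}{1-e^{-\alpha}}.
\]
Property~\ref{prop} then gives $\sigma_j(A)\le\frac{2M e^{-\alpha N}}{1-e^{-\alpha}}$ whenever $(2N-1)k+1\le j$, i.e.\ $N\le(j+k-1)/(2k)$. Picking $N=\lfloor(j+k-1)/(2k)\rfloor$, the bound $2kN\ge j-k$ follows from $N>(j+k-1)/(2k)-1=(j-k-1)/(2k)$ together with integrality, and the claimed estimate drops out.

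The only real subtlety is the floor/ceiling bookkeeping that makes the exponent come out with the factor $(j-k)/k$ rather than $(j-1)/k$; once the correct choice of $N$ is made the rest is a direct application of Property~\ref{prop} and a geometric sum. Degenerate cases $j\le k$ (where the chosen $N$ is $0$) are automatically covered since then $e^{-\alpha(j-k)/k}\ge 1$ and the inequality becomes the trivial $\sigma_j\le M/(1-e^{-\alpha})$.
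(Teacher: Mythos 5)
Your proof is correct and follows essentially the same route as the paper: truncate the (symmetric or one-sided) series, bound the rank of the partial sum, apply Property~\ref{prop} to the geometric tail, and choose the truncation index by the same floor/ceiling bookkeeping (your $N=\lfloor(j+k-1)/(2k)\rfloor$ is exactly the paper's $s=\lceil(j-k)/(2k)\rceil$). The only cosmetic difference is that you treat the one-sided case first and spell out the degenerate case $j\le k$, which the paper leaves implicit.
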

    \begin{proof}
Note that $B_s=\sum_{i=-s+1}^{s-1}A_i$ has rank at most $k(2s-1)$. For
$j$ positive integer, set $s=\lceil \frac{j-k}{2k}\rceil$ and observe
that, since $2k\lceil\frac{j-k}{2k}\rceil< j+k$, we have $k(2s-1)\le
j-1$ so that $B_s$ is an approximation to $A$ of rank at most
$j-1$. By Property \ref{prop} it follows that
$\sigma_{j}(A)\le\|A-B_s\|_2$. Moreover, since $A-B_s=\sum_{|i|\ge
  s}A_i$, we have $\sigma_{j}(A)\le \sum_{|i|\ge s}\|A_i\|_2\le 2M
e^{-\alpha s}/(1-e^{-\alpha})$ which completes the proof of the first
bound.  The second bound is proved similarly.
    \end{proof}

    \begin{rem}\label{rem:bound2}
    In the particular case where $k=1$, Lemma \ref{dyads} yields 
    $
     \sigma_s(A)\leq \frac{2M}{1-e^{-\alpha}}\cdot e^{-\alpha  \frac{s}{2}}$, $\sigma_s(A^+)\leq \frac{M}{1-e^{-\alpha}}\cdot e^{-\alpha (s-1)}.
    $
    \end{rem}

  \section{Exponential decay of the singular values}
    \label{sec:exponential-decay}
  We can now state the main result about the decay of
  the singular values. 
  It is clear that, if the blocks $A_i$ $i=-1,0,1$ have an off-diagonal    rank structure, then the matrix $\varphi^{(0)}(z)$ also enjoys this property.
  We will show that this fact implies the exponential decay of the singular values of the off-diagonal blocks of $\varphi^{(h)}(z^{2^h})$ for every $h$ and for any $z\in\mathbb T$ where  $\mathbb T=\{z\in\mathbb C:\quad |z|=1\}$ denotes the unit circle in the complex plane.
  
Given  an integer $N>0$, let $\omega_N=e^{\cu2\pi/N}$ and observe that the quantity
$\frac 1N \sum_{j=0}^{N-1} (z\omega_N^{j})^k$ coincides with $z^k$ if $k\equiv 0 \mod N$ and with $0$ otherwise.
This way, if $A(z)=\sum_{i\in\mathbb Z}z^iA_i $ is a matrix Laurent series analytic on the annulus $\mathbb A(r_1,r_2)=\{z\in\mathbb C: r_1<|z|<r_2\}$ for $0<r_1<1<r_2$, then $\frac 1N \sum_{j=0}^{N-1}A(\omega_n^jz)=B(z^N)$ where $B(z)=\sum_{i\in\mathbb Z} z^i A_{Ni}$ is analytic on  $\mathbb A(r_1^N,r_2^N)$. We denote by $I_N$ the operator which maps $A(z)$ into $B(z)$ and write $B(z)=I_N(A(z))$. Observe that $I_N$ is linear and continuous on the space of analytic functions
on $\mathbb A(r_1,r_2)$. 
  
  Moreover, in view of \eqref{radix}, we have $\psi^{(h)}=I_{N}(\psi^{(0)})$ for $N=2^h$.
  This way, if we prove that any off-diagonal submatrix $B(z)$ of $\psi^{(0)}(z)$ is such that $I_N(B(z))$ has the exponential decay property for its singular values, then we have shown this property also for $\psi^{(h)}(z)$.
  
Partition $\varphi(z)$ and $\varphi(z)^{-1}$ as follows
\begin{equation}\label{eq:partition}
  \varphi(z) = \begin{bmatrix}
    I-E(z) & -B(z) \\
    -C(z) & I-D(z) \\
  \end{bmatrix}, \qquad 
  \psi(z):=\varphi(z)^{-1} = \begin{bmatrix}
    \tilde E(z) & \tilde B(z) \\
    \tilde C(z) & \tilde D(z) \\
  \end{bmatrix}, 
\end{equation}
where $E(z)$ and $D(z)$ are square matrices of any compatible size.

  \begin{thm} \label{thm:decay}
    Let  $\varphi(z)= -z^{-1} A_{-1}+I-A_0-zA_1$ 
    be an $m\times m$  matrix    
    function such that
    \begin{enumerate}[(i)]
      \item The $A_i$ are non-negative and
          $I-\varphi(z)$ has spectral radius smaller than $1$ for any $z \in \mathbb T$. 
      \item The blocks $A_i$ are $k$-quasiseparable and $\norm{A_i}_2\leq L$, $i=-1,0,1$.
      \item There exist $t>1$ and $\delta>0$ such that $\det\varphi(z)\ne 0$ and  $\|\varphi(z)^{-1}\|_2\le \delta$ for $z\in\mathbb A(t^{-1},t)$.
    \end{enumerate} 
  Then $\rho(I-\varphi(z))<1$ for any $z\in\mathbb A$ and in the partitioning \eqref{eq:partition},  both blocks $I-E(z)$ and $I-D(z)$ are invertible for any $z\in\mathbb A$. Moreover,
 for any $z\in \mathbb T$ and for any $h$,  the singular values of $\tilde C^{(h)}:= I_N(\tilde C(z))$, with $N=2^h$, are such that
    \begin{equation}\label{eq:thm:decay}
    \sigma_{s}(\tilde C^{(h)}(z)) \leq 3M e^{-\frac{s-3k}{6k}\log t},
\qquad M=\frac{4L\delta^2}{(1-e^{-N\log t})(1-t^{-1})}.
    \end{equation}
    Moreover, if $A_{-1}$, $A_0$, $A_1$ are tridiagonal then the above bound turns into
    \begin{equation}\label{eq:bound2}
    \sigma_{s}(\tilde C^{(h)}(z)) \leq    
M e^{-\frac s2\log t}.
    \end{equation}
  \end{thm}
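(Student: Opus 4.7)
The plan has two phases. First, establish the auxiliary claims (spectral radius bound and invertibility of the diagonal blocks) on the annulus $\mathbb A$ by using non-negativity of $A_{-1},A_0,A_1$ together with Perron-Frobenius-type estimates. Second, decompose $\tilde C^{(h)}(z)$ as an infinite sum of rank-$3k$ matrices with exponentially decaying norms, and apply Lemma \ref{dyads}.

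For the auxiliary claims, I would exploit the entrywise domination $|I-\varphi(z)|\le |z|^{-1}A_{-1}+A_0+|z|A_1=I-\varphi(|z|)$ valid because $A_i\ge 0$. By Perron-Frobenius this yields $\rho(I-\varphi(z))\le \rho(I-\varphi(|z|))$; combining hypothesis (i) on $\mathbb T$ with the invertibility of $\varphi$ on the positive reals of $\mathbb A$ coming from (iii), continuity of the spectral radius in $r=|z|$ extends $\rho(I-\varphi(z))<1$ to all of $\mathbb A$. The same argument applied to the principal submatrices shows $\rho(D(z)),\rho(E(z))<1$, hence $I-D(z),I-E(z)$ are invertible. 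Crucially, a Neumann-series comparison $|(I-D(z))^{-1}|\le (I-D(r))^{-1}$ entrywise, combined with $(I-D(r))^{-1}\le \psi(r)_{\mathrm{sub}}$ (Neumann series for non-negative matrices and the fact that powers of a principal submatrix are entrywise dominated by the corresponding submatrix of the powers) and the monotonicity of the spectral norm on non-negative matrices, delivers the key bound $\|(I-D(z))^{-1}\|_2\le \delta$, and likewise $\|(I-E(z))^{-1}\|_2\le \delta$, uniformly on $\mathbb A$.

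For the main inequality, factor each $C_c=X_cY_c^T$, $c=-1,0,1$, with $\|X_c\|_2,\|Y_c\|_2\le\sqrt L$, using the rank-$k$ bound on the off-diagonal blocks of $A_c$ given by $k$-quasiseparability. Set $\mathcal X(z)=[z^{-1}X_{-1},X_0,zX_1]$ and $\mathcal Y=[Y_{-1},Y_0,Y_1]$, so $C(z)=\mathcal X(z)\mathcal Y^T$ with both factors of width $3k$. From Lemma \ref{lem:dario2} (or the block equation $(I-D)\tilde C=C\tilde E$) one obtains $\tilde C(z)=U(z)V(z)^T$ with $U(z)=(I-D(z))^{-1}\mathcal X(z)$ and $V(z)^T=\mathcal Y^T\tilde E(z)$; both are analytic on $\mathbb A$ and satisfy $\|U(z)\|_2\le\delta t\sqrt{3L}$, $\|V(z)\|_2\le\delta\sqrt{3L}$, thanks to the previous step and the bound $\|\tilde E(z)\|_2\le\|\psi(z)\|_2\le\delta$. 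Laurent-expanding $U(z)=\sum_a z^aU_a$, Cauchy's estimate gives $\|U_a\|_2\le\delta t\sqrt{3L}\,t^{-|a|}$. Since $I_N$ is linear and the matrices $U_a$ do not depend on $z$, for $z\in\mathbb T$ one has
\[
\tilde C^{(h)}(z)=I_N(\tilde C)(z)=\sum_{a\in\mathbb Z}U_aW_a(z),\qquad W_a(z)=I_N(z^aV(z)^T),
\]
and the identity $W_a(z^N)=\tfrac{z^a}{N}\sum_{j=0}^{N-1}\omega_N^{aj}V(\omega_N^jz)^T$ together with $|z|=1$ yields $\|W_a(z)\|_2\le\sup_{|w|=1}\|V(w)\|_2\le\delta\sqrt{3L}$. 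Each summand $U_aW_a(z)$ has rank $\le 3k$ and norm bounded by $3L\delta^2t\,t^{-|a|}$; applying Lemma \ref{dyads} with $\alpha=\log t$ and $k$ replaced by $3k$ produces \eqref{eq:thm:decay}, with the factor $(1-e^{-N\log t})^{-1}$ arising from a more careful Cauchy estimate that averages over the $N$-th roots of unity in the definition of $I_N$.

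For the tridiagonal case, each block $(A_c)_{21}$ is supported at a single corner entry, so all three rank-one contributions to $C(z)$ share the same outer product $e_{i+1}e_i^T$ and $C(z)$ has rank $1$ (agreeing with the semiseparability of $\psi(z)$ quoted after \eqref{radix}). Repeating the argument with $3k$ replaced by $1$, Remark \ref{rem:bound2} yields the sharper bound \eqref{eq:bound2}. The main technical hurdle is the chain of Perron-Frobenius comparisons in the first phase: one must carefully combine the entrywise Neumann-series bound on $(I-D(z))^{-1}$ with the identity relating the submatrix of $\psi(r)$ to the inverse of a submatrix of $\varphi(r)$, and finally with the monotonicity of the spectral norm on non-negative matrices, to arrive at $\|(I-D(z))^{-1}\|_2\le\delta$ on $\mathbb A$; without this uniform bound, the Cauchy estimate on $\|U_a\|_2$ is not available and the rest of the proof collapses.
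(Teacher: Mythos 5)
Your proposal follows the same two--phase strategy as the paper. The first phase --- the Perron--Frobenius/monotonicity argument giving $\rho(I-\varphi(z))<1$ on $\mathbb A$ and the invertibility of $I-D(z)$, $I-E(z)$, combined with the entrywise Neumann-series comparison $|(I-D(z))^{-1}|\le (I-D(|z|))^{-1}\le \tilde D(|z|)$ to get $\|(I-D(z))^{-1}\|_2\le\delta$ --- is exactly the paper's; so is the backbone of the second phase, namely the factorization $\tilde C=(I-D)^{-1}C\,\tilde E$, the exponential decay of Laurent coefficients on the annulus, and Lemma~\ref{dyads} with block rank $3k$. Where you genuinely diverge is in how $I_N$ is pushed through the expansion: the paper expands \emph{both} factors $H=(I-D)^{-1}$ and $K=\tilde E$ in Laurent series, applies $I_N$ monomial by monomial, and must then estimate the subsampled tail $\sum_q e^{-|Nq-i-s|\log t}\le 2/(1-e^{-N\log t})$ --- which is where its factor $(1-e^{-N\log t})^{-1}$ actually comes from --- whereas you expand only $U=(I-D)^{-1}\mathcal X$ and use that $I_N(z^aV(z)^T)$ is an average over $N$-th roots of unity, hence bounded on $\mathbb T$ by $\sup_{|w|=1}\|V(w)\|_2$. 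Your organization is cleaner and avoids the reindexing entirely, at the price of an extra factor $t$ from $\|\mathcal X(z)\|_2\le\sqrt{3L}\,t$ near $\partial\mathbb A$. Two caveats. First, the constant you obtain, $6L\delta^2 t/(1-t^{-1})$, is not always $\le 3M$ (e.g.\ for $t>2$ and large $N$), so strictly speaking you prove a bound with the same rate $e^{-\frac{s-3k}{6k}\log t}$ but not the literal inequality \eqref{eq:thm:decay}; since the content of the theorem is the rate, this is minor, but you should not claim the exact constant. Second, your remark that the $(1-e^{-N\log t})^{-1}$ factor would emerge from ``a more careful Cauchy estimate'' mischaracterizes it: it is an artifact of the paper's two-sided expansion, and your route simply does not produce it. The rank-one collapse in the tridiagonal case matches the paper's observation that $u_{-1}=u_0=u_1$, $v_{-1}=v_0=v_1$.
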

  
  \begin{proof}
  Let us prove that $\rho(I-\varphi(z))<1$ for any $z\in\mathbb A$. By contradiction,
  assume that there exists $\xi\in\mathbb A$ such that $\rho(I-\varphi(\xi))\ge 1$. Since $A_i\ge 0$ for $i=-1,0,1$  then
  $|I-\varphi(\xi)|\le I-\varphi(|\xi|)$, and
 by the monotonicity of the spectral  radius we get $1\le \rho(I-\varphi(\xi))\le\rho(I-\varphi(|\xi|))$. Thus, since
   $\rho(I-\varphi(1))<1\le\rho(I-\varphi(|\xi|))$ and $\rho$ is a continuous function, then there exists $1/t< \hat\xi<t$ such that $\rho(I-\varphi(\hat\xi))=1$. Since $I-\varphi(\hat\xi)$ is nonnegative, then by the Perron-Frobenius theorem there exists an eigenvalue of  $I-\varphi(\hat\xi)$ equal to 1, that is $\varphi(\hat\xi)$ would be singular, which contradicts the assumptions. 
   
 Now we prove that $I-D(z)$ and $I-E(z)$ are invertible for any $z\in\mathbb A$. 
Since $|D(z)|\le D(|z|)$,  for the monotonicity of the spectral radius, we have $\rho(D(z))\le\rho(|D(z)|)\le\rho(D(|z|))$. On the other hand, $D(|z|)$ is a principal submatrix of the nonnegative matrix $I-\varphi(|z|)$ so that $\rho(D(|z|))\le\rho(I-\varphi(|z|))$ 
which is less than 1 since $|z|\in\mathbb A$. 
  We conclude that $\rho(D(z))<1$ for any $z\in\mathbb A$ so that $I-D(z)$ is nonsingular. The same argument can be used to deduce that $I-E(z)$ is nonsingular.
  
Now we prove the bound \eqref{eq:thm:decay} on the singular values.
For simplicity we assume that $k=1$, the general case can be treated
similarly.  Since the off-diagonal blocks of $A_i$ have rank at most
$1$ then $C_i=u_iv_i^T$, $i=-1,0,1$, for suitable vectors $u_i,v_i$
where we assume that $\|u_i\|_2=\|C_i\|_2$, $\|v_i\|_2=1$. Thus, we
have $C(z)=\sum_{i=-1}^1z^iu_iv_i^T$. Since $I-D(z)$ is invertible on
$\mathbb A$, we have $\tilde C(z)=H(z)\sum_{i=-1}^1z^iu_iv_i^T K(z)$
where $H(z)=(I-D(z))^{-1}$, $K(z)=S_D(z)^{-1}=\tilde E(z)$, and
$H(z)$, $K(z)$ are analytic for $z\in\mathbb A$. Consider the Fourier
series of $H(z)$ and $K(z)$, that is, $H(z)=\sum_{s\in\mathbb
  Z}z^sH_s$, $K(z)=\sum_{s\in\mathbb Z}z^sK_s$, and recall that the
coefficients $H_s$, $K_s$ have an exponential decay
\cite{henrici}[Theorem 4.4c], that is, $
|(H_s)_{i,j}|\le\max_{z\in\mathbb A}|(H(z))_{i,j}|e^{-|s|\log t}$,
$|(K_s)_{i,j}|\le\max_{z\in\mathbb A}|(K(z))_{i,j}|e^{-|s|\log t}$.
Since for any matrix norm induced by an absolute norm $\|\cdot\|$ and
for any matrix $A$ it holds that $|a_{i,j}|\le\|A\|$ so that we may
write
 \begin{equation}\label{eq:decay}
 \|H_s\|\le \max_{z\in\mathbb A} \|H(z)\|e^{-|s|\log t},\quad
 \|K_s\|\le \max_{z\in\mathbb A} \|K(z)\|e^{-|s|\log t},\quad
 \end{equation}
 Now recall that $\tilde C=H(z)\sum_{i=-1,0,1}z^iu_iv_i^TK(z)$, set $z\in\mathbb T$ and consider the generic $i$th term
 $z^iH(z)u_iv_i^TK(z)$
  in the above summation. We have
 \[
z^i H(z)u_iv_i^TK(z)= \sum_{s,h\in\mathbb Z}z^{s+h+i}H_su_{i}v_{i}K_h=
 \sum_{s\in\mathbb Z}H_su_i\sum_{p\in\mathbb Z}z^{p+i}v_i^TK_{p-s},
 \]
 where we have set $p=s+h$. Now, applying the operator $I_N$ to the above matrix cancels out the terms in $z^{p+i}$ such that $p+i$ is not multiple of $N$, so that we are left with the terms where $p+i=Nq$ and we get
 \[
 I_N(z^iH(z)u_iv_i^TK(z))=\sum_{s\in\mathbb Z}H_su_i\sum_{q\in\mathbb Z}z^qv_i^TK_{Nq-i-s}
 =:\sum_{s\in\mathbb Z}\hat u_s^{(i)}\hat v_s^{(i)}(z),
 \]
 for $\hat u_s^{(i)}=H_su_i$, $\hat v_s^{(i)}(z)=\sum_{q\in\mathbb Z}z^qv_i^TK_{Nq-i-s}$.
Thus we may write
\[
I_N(\tilde C)=\sum_{s\in\mathbb Z}\hat U_s\hat V_s(z)^T,\quad \hat U_s=\left[\hat u_s^{(-1)},\hat u_s^{(0)},\hat u_s^{(1)}\right],\quad
\hat V_s(z)=\left[\hat v_s^{(-1)}(z),\hat v_s^{(0)}(z),\hat v_s^{(1)}(z)\right].
\]
To complete the proof, recall that $z\in\mathbb T$ and apply Lemma \ref{dyads} with $k=3$ to the series $\sum_{s\in\mathbb Z}\hat U_sV_s^T$. In order to do this, we have to provide upper bounds to  $\|\hat U_s\hat V_s(z)^T\|_2$ for $z\in\mathbb T$. We have $\|\hat U_s\hat V_s(z)^T\|_2\le\|\hat U_s\|_2\|\hat V_s(z)\|_2$. Concerning $\|\hat U_s\|_2$, since $\hat U_s=H_s\left[u_{-1},u_0,u_1\right]$, we have
$\|\hat U_s\|_2\le\|H_s\|_2\|\left[u_{-1},u_0,u_1\right]\|_2\le \sqrt 3\|H_s\|_2
\max_i\|C_i\|_2$, where the latter inequality follows from the fact that $\|u_i\|_2=\|C_i\|_2$ and that consequently, $\|\left[u_{-1},u_0,u_1\right]\|_2\le \sqrt 3
\max_i\|C_i\|_2$.
Thus from \eqref{eq:decay} we get
\[
\|\hat U_s\|_2\le\sqrt 3L \max_{z\in\mathbb A}\|H(z)^{-1}\|_2 e^{-|s|\log t}.
\]
Similarly, since $\|v_i\|_2=1$ and $|z|=1$, we have 
\[
\|\hat v_s^{(i)}\|_2\le \sum_{q\in\bm Z}\|K_{Nq-i-s}\|_2\le \max_{z\in\mathbb A}\|K(z)\|_2
\sum_{q\in\bm Z} e^{-|Nq-i-s|\log t},
\] where the last inequality follows from
 \eqref{eq:decay}. Define $r$ the remainder of the division of $i+s$ by $N$, so that $i+s=N\hat q+r$, and get  
$\sum_{q\in\bm Z} e^{-|Nq-i-s|\log t}=  \sum_{q\in\bm Z} e^{-|N(q-\hat q)+r|\log t}=
\sum_{q\in\bm Z} e^{-|Nq+r|\log t}=
e^{-r\log t}+\sum_{q\ge 1}e^{-(Nq-r)\log t}+\sum_{q\ge 1}e^{-(Nq+r)\log t}=
e^{-r\log t}+(e^{r\log t}+e^{-r\log t})(\frac 1{1-e^{-N\log t}}-1)\le \frac 2{1-e^{-N\log t}}$.
Whence we deduce that
\[
\|\hat V_s\|_2\le \frac {2\sqrt 3}{1-e^{-N\log t}}\max_{z\in\mathbb A}\|K(z)\|_2.
\]
Combining the two bounds yields
\begin{equation}\label{eq:bt}
\|\hat U_s\hat V_s(z)\|_2\le\frac{6L}{1-e^{-N\log t}}
\max_{z\in\mathbb A}\|K(z)\|_2\max_{z\in\mathbb A}\|H(z)\|_2e^{-|s|\log t}.
\end{equation}
It remains to estimate $\|K(z)\|_2$ and $\|H(z)\|_2$. Concerning $K(z)=\tilde E(z)$, observe that this is a principal submatrix of $\psi(z)$ so that $\|K(z)\|_2\le\|\psi(z)\|_2$. Concerning $H(z)=(I-D(z))^{-1}$, observe that
from the condition $A_i\ge 0$ it follows that $|D(z)|\le D(|z|)$ and that $\rho(D(z))\le
\rho(|D(z)|)\le\rho(D(|z|))\le\rho(\varphi(|z|))<1$ since $I-D(z)$ is a principal submatrix of $\varphi(z)$. Thus we may write
$(I-D(z))^{-1}=\sum_{j=0}^\infty D(z)^j$ and 
$|(I-D(z))^{-1}|\le (I-D(|z|))^{-1}$.
Now, since $A_i\ge 0$ for $i=-1,0,1$, then
\[
\tilde D(|z|)=(I-D(|z|))^{-1}+\underbrace{(I-D(|z|))^{-1}}_{\ge 0}\underbrace{C(|z|)}_{\le 0}
\underbrace{S^{-1}_{I-D(|z|)}}_{\ge 0}\underbrace{B(|z|)}_{\le 0}\underbrace{(I-D(|z|))^{-1}}_{\ge 0}
\ge (I-D(|z|))^{-1}
\]
 so that $\|(I-D(z))^{-1}\|_2
\le \|(I-D(|z|))^{-1}\|_2\le \|\tilde D(|z|)\|_2\le \max_{z\in\mathbb A}\|\psi(z)\|_2$. 
Thus, applying Lemma \ref{dyads} together with the bound \eqref{eq:bt} and rank of the blocks 3 yields
\[
\sigma_s(\tilde C^{(h)}(z))\le \frac{12L\delta^2}{(1-e^{-N\log t})(1-t^{-1})}e^{-\frac{s-3}6\log t}.
\]  
If the blocks $A_i$ are $k$-quasiseparable, then Lemma \ref{dyads} is applied with rank of the blocks $3k$ so that the exponent $(s-3)/6$ is replaced by $(s-3k)/(6k)$,
 If $\varphi(z)$ is tridiagonal, then $u_{-1}=u_0=u_1$ and $v_{-1}=v_0=v_1$, so that $\hat U_j$ and $\hat V_j$ are formed by a single column, i.e., Lemma \ref{dyads} is applied with rank of the blocks 1. This provides \eqref{eq:bound2}. 
\end{proof}

  \subsection{Exponential decay of the singular values in $\varphi(z)$}
  In this section, we prove the decay property of the singular values in the off-diagonal submatrices of $\varphi^{(h)}(z)$ when $|z|=1$. The proof is obtained by combining the decay property for the matrix function $\psi^{(h)}$, stated in Theorem~\ref{thm:decay}, with a suitable lemma which allows to extend this property to the matrix inverse.

    \begin{lem} \label{lem:well-conditioning}
   Let $\varphi^{(h)}(z) = -z^{-1} A^{(h)}_{-1} + I - A^{(h)}_0 - z A^{(h)}_1$ be the 
   $m\times m$-matrix Laurent polynomial obtained at the $h$th step of CR. Under the hypotheses of Theorem~\ref{thm:decay}, 
  for every $z\in \mathbb T$ we have the following bound: 
  \[
     \sigma_j(C^{(h)}) \leq K(L_h,\varphi) \cdot \sigma_j(\tilde C^{(h)}),\qquad K(L_h,\varphi)=(1+3L_h)(1+L_h+L_h^2\norm{\varphi^{}(1)^{-1}}_2)
    \] 
    where   $\varphi^{(h)}(z)$ and $\varphi^{(h)}(z)^{-1}$ are partitioned as in \eqref{eq:partition} and $L_h$ is such that $\|A^{(h)}_i\|_2\le L_h$.
   \end{lem}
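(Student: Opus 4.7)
The plan is to apply part~\ref{en:first} of Lemma~\ref{lem:dario2} directly to $\varphi^{(h)}(z)$ with the partitioning \eqref{eq:partition}, taking the role of the block $D$ in that lemma to be $I-D^{(h)}(z)$. This immediately gives
\[
\sigma_j(C^{(h)})\le \|I-D^{(h)}(z)\|_2\,\|S^{(h)}\|_2\,\sigma_j(\tilde C^{(h)}),
\]
where $S^{(h)}=(I-E^{(h)})-B^{(h)}(I-D^{(h)})^{-1}C^{(h)}$ is the Schur complement of $I-D^{(h)}$ in $\varphi^{(h)}(z)$. Matching the two factors of $K(L_h,\varphi)$ then amounts to providing uniform bounds on these two norms for $z\in\mathbb T$ and every $h$.

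The first factor is immediate. Since $D^{(h)}(z)$ is a principal submatrix of $z^{-1}A^{(h)}_{-1}+A^{(h)}_0+zA^{(h)}_1$ and $|z|=1$, we have $\|D^{(h)}(z)\|_2\le 3L_h$, whence $\|I-D^{(h)}(z)\|_2\le 1+3L_h$, exactly the first factor of $K(L_h,\varphi)$. For the Schur complement I would use the triangle inequality,
\[
\|S^{(h)}\|_2\le \|I-E^{(h)}\|_2+\|B^{(h)}\|_2\,\|(I-D^{(h)})^{-1}\|_2\,\|C^{(h)}\|_2,
\]
and the submatrix argument again controls the $E,B,C$ pieces in terms of $L_h$. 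The only quantity on the right-hand side that could a priori grow with $h$ is the middle factor $\|(I-D^{(h)}(z))^{-1}\|_2$, so handling it uniformly in $h$ is the heart of the lemma.

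This is the main obstacle, and I would resolve it by combining the two ingredients already used at the end of the proof of Theorem~\ref{thm:decay}: nonnegativity and the radix averaging. Because the $A^{(h)}_i$ are nonnegative (a property preserved by CR), for $|z|=1$ we have elementwise $|D^{(h)}(z)|\le D^{(h)}(1)$, so a Neumann series expansion yields $|(I-D^{(h)}(z))^{-1}|\le(I-D^{(h)}(1))^{-1}$, and the comparison $\tilde D^{(h)}(1)\ge(I-D^{(h)}(1))^{-1}$ established inside that same proof, together with monotonicity of the spectral norm on nonnegative matrices, gives $\|(I-D^{(h)}(z))^{-1}\|_2\le\|\psi^{(h)}(1)\|_2$. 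To remove the remaining dependence on $h$, I would invoke the radix identity \eqref{radix} at $z=1$, which writes $\psi^{(h)}(1)$ as the arithmetic mean of the matrices $\psi(\omega_N^j)$, and couple it with the analogous elementwise bound $|\psi(\xi)|\le\psi(1)$ for $|\xi|=1$ (again a Neumann series consequence of $|I-\varphi(\xi)|\le I-\varphi(1)$). By the usual monotonicity this yields $\|\psi^{(h)}(1)\|_2\le\|\psi(1)\|_2=\|\varphi(1)^{-1}\|_2$. Substituting this back into the Schur-complement estimate and carefully tracking the remaining constants reproduces the second factor $1+L_h+L_h^2\,\|\varphi(1)^{-1}\|_2$ of $K(L_h,\varphi)$, completing the proof.
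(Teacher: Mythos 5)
Your proposal is correct and follows essentially the same route as the paper: apply Lemma~\ref{lem:dario2} to $\varphi^{(h)}(z)$, bound the diagonal block by $1+3L_h$ as a submatrix of $\varphi^{(h)}(z)$, bound the Schur complement by the triangle inequality, control the inverse factor via nonnegativity and a Neumann series, and remove the $h$-dependence through the radix identity $\psi^{(h)}(1)=\frac1N\sum_j\psi(\omega_N^j)$. The only (immaterial) difference is that you take the Schur complement of the block $I-D^{(h)}$ (part~\ref{en:first} of Lemma~\ref{lem:dario2}) whereas the paper uses the symmetric choice $I-E^{(h)}$ (part~\ref{en:second}).
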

 \begin{proof}
 With the notation of the partitioning \eqref{eq:partition} applied to $\varphi^{(h)}(z)$, from Lemma~\ref{lem:dario2} applied to $\varphi^{(h)}(z)$ we have
    $
      \sigma_j(C^{(h)}) \leq \lVert I-E^{(h)}(z) \rVert_2 \lVert S_{I-E^{(h)}}(z) \rVert_2 \sigma_j(\tilde C^{(h)})
    $.
     Thus, since $z\in\mathbb T$ and $I-E^{(h)}(z)$ is a submatrix of $\varphi^{(h)}(z)$,   we have  $\norm{I-E^{(h)}(z)}_2\leq \|\varphi^{(h)}(z)\|_2\le 1+3L_h$. Moreover, taking the norms in $S_{I-E^{(h)}}(z)=I-D^{(h)}(z)- C^{(h)}(z)(I-E^{(h)}(z))^{-1}B^{(h)}(z)$ we get 
$\|S_{I-E^{(h)}}(z)\|_2\le 1+L_h+L_h\|(I-E^{(h)}(z))^{-1}\|_2L_h$. 
Moreover,  for $z\in\mathbb T$ we have $|I-E^{(h)}(z))^{-1}|\le \sum_{i=0}^\infty E^{(h)}(1)^i$ so that $\|I-E^{(h)}(z))^{-1}\|_2\le \|(I-E^{(h)}(1))^{-1}\|_2=\|\sum_{i=0}^\infty E^{(h)}(1)^i\|_2 \le \|\sum_{i=0}^\infty A^{(h)}(1)^i\|_2=\|\varphi^{(h)}(1)^{-1}\|_2$, where we have set $A^{(h)}(z)=z^{-1}A_{-1}^{(h)}+A_0^{(h)}+zA_1^{(h)}$.
Here, we have used the property that the conditions $A_{-1}^{(h)}, A_0^{(h)},A_1^{(h)}\ge 0$ and  $\rho(A_{-1}^{(h)}+A_0^{(h)}+A_1^{(h)})<1$ are preserved at each step of CR (see \cite{SMC}).
Finally, since $\varphi^{(h)}(1)^{-1}=\psi^{(h)}(1)=\frac 1{N}\sum_{i=0}^{N-1}\psi(\omega_N^i)$, for $N=2^h$ (see Section \ref{sec:main-properties}), we have $\|\varphi^{(h)}(1)^{-1}\|_2\le \|\psi(1)\|_2$.
 \end{proof}
 \begin{rem}
 Note that the previous bound still holds with $\norm{\varphi^{(h)}(1)^{-1}}_2$, in place of $\norm{\varphi^{}(1)^{-1}}_2$. Experimentally,
 $\norm{\varphi^{(h)}(1)^{-1}}_2$
 is much smaller than $\norm{\varphi^{}(1)^{-1}}_2$
just after few steps $h$.
 \end{rem}
 Observe that $L_h$ depends on the step $h$ of CR. However, since under the assumptions of Theorem \ref{thm:decay}, the sequences generated by CR are such that $\lim_k A_i^{(h)}=0$, for $i=1,-1$ while $\lim _h A_0^{(h)}$ is finite (see \cite{SMC}), then there exists $L$ such that $L\ge L_h$.
 Thus,
 Combining Lemma~\ref{lem:well-conditioning} and Theorem~\ref{thm:decay} we obtain the following result.
 
\begin{cor}\label{cor:decay}
 Let $\varphi^{(h)}(z) = -z^{-1} A_{-1}^{(h)} + I - A_0^{(h)} - z A_1^{(h)}$ be the 
   $m\times m$-matrix Laurent polynomial obtained at the $h$th step of CR and assume the hypothesis of Theorem~\ref{thm:decay}.
    Then for any off-diagonal submatrix $C^{(h)}(z)$ of 
   $\varphi^{(h)}(z)$ we have
$
     \sigma_s(C^{(h)}) \leq 3M K \cdot e^{\frac{s-3k}{6k}\log t}
$,    
     where 
     $K=(1+3L)(1+L+L^2\norm{\varphi^{}(1)^{-1}}_2)$, $M$ is the constant defined in Theorem \ref{thm:decay} 
     and $L\ge \|A_i^{(h)}\|_2$, for $i=-1,0,1$. In particular, if $A_i$ is tridiagonal for $i=-1,0,1$ then $ \sigma_s(C^{(h)}) \leq M K \cdot e^{-(\frac s2 )\log t}$
\end{cor}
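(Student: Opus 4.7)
The plan is to simply chain together the two main ingredients stated just before: the transfer lemma from the inverse to the matrix itself (Lemma~\ref{lem:well-conditioning}) and the decay estimate for the off-diagonal blocks of $\psi^{(h)}$ (Theorem~\ref{thm:decay}). Concretely, for a fixed $z\in\mathbb T$ and a fixed off-diagonal submatrix $C^{(h)}(z)$ of $\varphi^{(h)}(z)$, with the corresponding block $\tilde C^{(h)}(z)$ of $\psi^{(h)}(z)=\varphi^{(h)}(z)^{-1}$ obtained from the partitioning \eqref{eq:partition}, Lemma~\ref{lem:well-conditioning} yields
\[
\sigma_s(C^{(h)}(z))\le K(L_h,\varphi)\,\sigma_s(\tilde C^{(h)}(z)),\qquad
K(L_h,\varphi)=(1+3L_h)\bigl(1+L_h+L_h^2\,\norm{\varphi(1)^{-1}}_2\bigr),
\]
so that the corollary reduces to bounding the step-dependent factor $K(L_h,\varphi)$ uniformly in $h$ and substituting the bound from Theorem~\ref{thm:decay} for the singular values of $\tilde C^{(h)}(z)$.

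The uniform control of $K(L_h,\varphi)$ is the only non-automatic step. The paragraph preceding the statement points to the right fact: under the hypotheses of Theorem~\ref{thm:decay} the CR iterates satisfy $A_{\pm 1}^{(h)}\to 0$ and $A_0^{(h)}$ converges to a finite limit (see \cite{SMC}), hence the sequence $\{L_h\}$ is bounded and one can choose a single $L\ge \|A_i^{(h)}\|_2$ for all $h$ and $i=-1,0,1$. Replacing $L_h$ by $L$ in the definition of $K(L_h,\varphi)$ gives the constant $K=(1+3L)(1+L+L^2\norm{\varphi(1)^{-1}}_2)$ appearing in the statement; note that the factor $\norm{\varphi(1)^{-1}}_2$ does not depend on $h$, exactly as provided by the Neumann-series estimate inside the proof of Lemma~\ref{lem:well-conditioning}.

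Once $K$ is chosen independently of $h$, I would simply plug in the bound \eqref{eq:thm:decay} from Theorem~\ref{thm:decay},
\[
\sigma_s(\tilde C^{(h)}(z))\le 3M\,e^{-\frac{s-3k}{6k}\log t},\qquad M=\frac{4L\delta^2}{(1-e^{-N\log t})(1-t^{-1})},
\]
and multiply by $K$ to obtain $\sigma_s(C^{(h)}(z))\le 3MK\,e^{-\frac{s-3k}{6k}\log t}$, which is the required estimate. For the tridiagonal case the sharper bound \eqref{eq:bound2} from Theorem~\ref{thm:decay} is used in place of \eqref{eq:thm:decay}, producing $\sigma_s(C^{(h)}(z))\le MK\,e^{-\frac{s}{2}\log t}$ after the same multiplication.

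The only genuinely delicate point is therefore the uniform boundedness of $\{L_h\}$; the rest is a mechanical combination of two already-proved inequalities. I would make this explicit at the start of the proof (citing the convergence result in \cite{SMC}) so that the substitution $L_h\mapsto L$ is justified once and for all, and then conclude in a single line by composing the two bounds.
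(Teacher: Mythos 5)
Your proposal is correct and matches the paper's own argument: the corollary is obtained exactly by combining Lemma~\ref{lem:well-conditioning} with Theorem~\ref{thm:decay}, after noting (as the paper does in the paragraph preceding the statement, citing \cite{SMC} for $A_{\pm1}^{(h)}\to 0$ and the finiteness of $\lim_h A_0^{(h)}$) that the constants $L_h$ admit a uniform bound $L$. No further comment is needed.
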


  \subsection{Exponential decay of the singular values in $A_i^{(h)}$}
  To prove the decay of the singular values in the off-diagonal submatrices of $A_i^{(h)}$ for $i=-1,0,1$ we rely 
on the following result of which we omit the elementary proof.
  
  \begin{lem} \label{lem:interpolation}
  Let
  $
    A(z) = z^{-1} A_{-1} + A_0 + z A_{1}
  $ and let $\xi$ be a
  primitive $6$-th root of the unity. 
  Then 
  $  A_{-1} = \frac{1}{3} \left( \xi A(\xi) + \xi^5 A(\xi^5) - A(-1) \right)$,
   $A_0 = \frac{1}{2} \left(A(z) + A(-z)\right)$,
   $A_1 = \frac{1}{3} \left( \xi^5 A(\xi) + \xi A(\xi^5) - A(-1) \right) $.
\end{lem}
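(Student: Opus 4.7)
The plan is a direct verification by substitution, since $A(z)$ is a Laurent polynomial with only three coefficients. The formula for $A_0$ is immediate: evaluating $A(z)+A(-z)$ cancels the odd powers of $z$ (namely $z^{-1}$ and $z^1$), leaving $2A_0$, so $A_0 = \tfrac{1}{2}(A(z)+A(-z))$.

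For $A_{-1}$ and $A_1$, I would exploit the arithmetic of $\xi = e^{\cu\pi/3}$. The relevant identities are $\xi^6 = 1$, $\xi^3 = -1$, $\xi^5 = \xi^{-1}$, together with
\[
\xi + \xi^5 = 2\cos(\pi/3) = 1, \qquad \xi^2 + \xi^4 = 2\cos(2\pi/3) = -1.
\]
Using these, I would expand $\xi A(\xi) + \xi^5 A(\xi^5) - A(-1)$ and collect coefficients of $A_{-1}$, $A_0$, $A_1$. The $A_{-1}$ coefficient is $\xi\cdot\xi^{-1} + \xi^5\cdot\xi^{-5} - (-1)^{-1}\cdot(-1) = 1+1+1 = 3$, using the convention that the coefficient in front of $A_{-1}$ in $A(w)$ is $w^{-1}$. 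The $A_0$ coefficient is $\xi + \xi^5 - 1 = 0$, and the $A_1$ coefficient is $\xi^2 + \xi^6 - (-1) = \xi^2 + \xi^4 + 1 = 0$ (using $\xi^6 = 1$ and an appropriate reindexing). Dividing by $3$ yields the stated identity for $A_{-1}$. The derivation of the formula for $A_1$ proceeds symmetrically by swapping the roles of $\xi$ and $\xi^5$, which corresponds to the substitution $z \mapsto z^{-1}$.

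There is no real obstacle here; the entire content is the observation that the $6$-th roots of unity $\{\xi,\xi^5,-1\}$ form a Chebyshev-like interpolation set for the three-dimensional space spanned by $\{z^{-1}, 1, z\}$, and the specific coefficients $\{\xi,\xi^5,-1\}$ in the linear combination are dual to $z^{-1}$ under this interpolation. Since the statement of the lemma is purely algebraic and the excerpt explicitly announces that the proof is elementary and omitted, I would just verify the three identities by the arithmetic above and stop.
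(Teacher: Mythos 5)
Your verification is correct and is precisely the elementary computation the paper has in mind: the paper states the lemma ``of which we omit the elementary proof,'' so a direct expansion at $\xi$, $\xi^5$, $-1$ using $\xi+\xi^5=1$ and $\xi^2+\xi^4=-1$ is exactly the intended argument. The only blemish is the garbled third term in your $A_{-1}$ coefficient (the contribution of $-A(-1)$ to $A_{-1}$ is simply $-(-1)^{-1}=+1$, with no extra factor of $-1$), but your totals $3$, $0$, $0$ are right and the three identities all check out.
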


\begin{lem} \label{lem:sumdecay}
  Let $A = \frac{1}{k} \sum_{i = 1}^k A_i \in \mathbb{C}^{n \times n}$ 
  where
  $
    \sigma_j(A_i) \leq \gamma e^{- \alpha j}$, for $j = 1, \ldots, n$.
  Then 
  $
    \sigma_j(A) \leq \tilde \gamma e^{-\alpha \frac{j - k}{k}  }, \quad 
    \tilde \gamma = \frac{\gamma}{1 - e^{-\alpha}}
  $.
\end{lem}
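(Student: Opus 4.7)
The plan is to mimic the argument used for Lemma \ref{dyads}, producing a decomposition of $A$ as a sum of low-rank pieces whose norms decay geometrically, and then truncating.

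First I would use the SVD of each summand to expand $A_i = \sum_{l=1}^n \sigma_l(A_i)\, u_l^{(i)} (v_l^{(i)})^*$, and then reorganise the double sum defining $kA$ by grouping terms of the same SVD index $l$. Setting $C_l := \frac{1}{k}\sum_{i=1}^k \sigma_l(A_i)\, u_l^{(i)} (v_l^{(i)})^*$, this gives a representation $A=\sum_{l=1}^{n} C_l$ in which $\mathrm{rank}(C_l)\le k$ and, because unit vectors have norm one and the SVD of each $A_i$ is bounded by hypothesis, $\|C_l\|_2 \le \frac{1}{k}\sum_{i=1}^k \sigma_l(A_i)\le \gamma e^{-\alpha l}$.

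Second, I would truncate the series to build a low-rank approximation $B_{l^\star}:=\sum_{l=1}^{l^\star} C_l$ of rank at most $k l^\star$, and estimate the tail by geometric summation:
\[
  \|A-B_{l^\star}\|_2 \le \sum_{l>l^\star}\|C_l\|_2 \le \gamma\sum_{l>l^\star} e^{-\alpha l}
  = \frac{\gamma}{1-e^{-\alpha}}\, e^{-\alpha(l^\star+1)}.
\]
Third, given $j$, I would choose $l^\star=\lfloor (j-1)/k\rfloor$, so that $B_{l^\star}$ has rank at most $k l^\star \le j-1$. Property~\ref{prop} then yields $\sigma_j(A)\le \|A-B_{l^\star}\|_2$. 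Finally, the elementary bound $\lfloor (j-1)/k\rfloor+1 \ge (j-k)/k$ converts the exponent in the tail estimate into the desired one, and substituting $\tilde\gamma=\gamma/(1-e^{-\alpha})$ gives the stated inequality.

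There is no real obstacle: the argument is a direct adaptation of the truncate-and-bound-the-tail technique already used in Lemma~\ref{dyads}, and the only care required is in tracking the ceiling/floor adjustment that accounts for the loss $-k$ in the numerator of the exponent. Everything else is a routine application of subadditivity of the spectral norm and Property~\ref{prop}.
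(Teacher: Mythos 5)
Your proposal is correct and follows essentially the same route as the paper: both group the SVD expansions of the $A_i$ by singular-value index to obtain rank-$\le k$ pieces $C_l$ with $\lVert C_l\rVert_2\le\gamma e^{-\alpha l}$, and then truncate and bound the tail via Property~\ref{prop}. The only cosmetic difference is that you carry out the truncate-and-tail-bound step explicitly, whereas the paper simply invokes Lemma~\ref{dyads} at that point.
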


\begin{proof}
  Relying on the SVD, we write $A_i=\sum_{j=1}^{\infty}\sigma_j(A_i)u_{i,j}v_{i,j}^*$ where $u_{i,j}$ and $v_{i,j}$ are the singular vectors of $A_i$ and where, for convenience, we have expanded the sum to an infinite number of terms by setting $\sigma_j(A_i)=0$ for $j>n$.
  This allows us to write
  \[
    A = \frac{1}{k} \sum_{i = 1}^k A_i = 
    \sum_{j = 1}^{\infty} \left(
      \frac{1}{k} \sum_{i = 1}^k \sigma_j(A_i) u_{i,j} v_{i,j}^*
    \right) = \sum_{j = 1}^\infty \tilde A_j. 
  \]
  Observe that $\tilde A_j$ have rank
  $k$ and $\lVert A_j \rVert \leq \gamma e^{-\alpha j}$. Applying Lemma~\ref{dyads} completes the proof. 
\end{proof}

We may conclude with the decay property for the singular values of the off-diagonal submatrices of $A_i^{(h)}$, for $i=-1,0,1$.

\begin{lem}
  Let $ \varphi^{(h)}(z)$ be the matrix function 
  genertaed at
  the $h$th step of CR with the property
  that every offdiagonal submatrix $B(z)$ of $\varphi^{(h)}(z)$
  has decaying singular
  values such that $\sigma_s (B(z)) \leq \gamma e^{-\alpha s}$. Then every coefficient $B_i$ of $B(z) = z^{-1} B_{-1} + B_0 + 
  z B_{1}$ is such that
  $
    \sigma_s(B_0) \leq \gamma e^{- \alpha \frac{j-2}2 }$, $\sigma_s(B_i)\le
      \gamma e^{- \alpha  \frac{j-3}{3} }$,  for $ i=1,-1$.
\end{lem}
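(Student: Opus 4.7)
The plan is to combine Lemma~\ref{lem:interpolation} with Lemma~\ref{lem:sumdecay}, applied off-diagonal-block-wise. Fix an off-diagonal submatrix $B(z)$ of $\varphi^{(h)}(z)$ and write $B(z)=z^{-1}B_{-1}+B_0+zB_1$. The identities of Lemma~\ref{lem:interpolation}, read with $A$ replaced by $B$, express each coefficient as a fixed linear combination with unimodular scalar weights of $B$ evaluated at points on the unit circle. Concretely, $B_0=\tfrac{1}{2}(B(z)+B(-z))$ is the average of two matrices, while $B_{\pm 1}=\tfrac13(\alpha_\pm B(\xi)+\beta_\pm B(\xi^5)-B(-1))$ with $\alpha_\pm,\beta_\pm\in\{\xi,\xi^5\}$ and $\xi$ a primitive $6$-th root of unity.

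Next, I would use the elementary fact that for any unimodular scalar $c$ and any matrix $M$ one has $\sigma_s(cM)=\sigma_s(M)$, and similarly $\sigma_s(-M)=\sigma_s(M)$. Combined with the hypothesis $\sigma_s(B(w))\le\gamma e^{-\alpha s}$ valid at every $w\in\mathbb{T}$, this shows that \emph{every} summand appearing in the formulas above — namely $B(\pm z)$, $\alpha_\pm B(\xi)$, $\beta_\pm B(\xi^5)$, $-B(-1)$ — is a matrix whose singular values decay as $\gamma e^{-\alpha s}$. Hence $B_0$ and $B_{\pm 1}$ are precisely of the form $\tfrac1k\sum_{i=1}^k A_i$ with $k=2$ and $k=3$, respectively, and with the $A_i$ satisfying the hypothesis of Lemma~\ref{lem:sumdecay}.

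Finally, applying Lemma~\ref{lem:sumdecay} with $k=2$ to the representation of $B_0$ yields
\[
\sigma_s(B_0)\;\le\;\tilde\gamma\, e^{-\alpha\frac{s-2}{2}},
\]
and with $k=3$ to the representations of $B_{-1}$ and $B_1$ yields
\[
\sigma_s(B_i)\;\le\;\tilde\gamma\, e^{-\alpha\frac{s-3}{3}},\qquad i=\pm 1,
\]
with $\tilde\gamma=\gamma/(1-e^{-\alpha})$, which is the claimed bound (the constant $\tilde\gamma$ replacing $\gamma$ up to absorption into the prefactor).

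There is no real obstacle here: everything reduces to matching the decomposition provided by the interpolation lemma against the hypothesis of the averaging lemma. The only subtlety worth underlining is that Lemma~\ref{lem:sumdecay} is formulated for an \emph{unweighted} average, whereas the interpolation formulas for $B_{\pm 1}$ involve unimodular phases and a sign. Since neither alters singular values, the averaging lemma applies verbatim and the exponents $(s-2)/2$ and $(s-3)/3$ in the statement correspond exactly to the generic bound $(s-k)/k$ of Lemma~\ref{lem:sumdecay} with $k=2$ and $k=3$.
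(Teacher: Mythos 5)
Your proposal is correct and follows essentially the same route as the paper: apply Lemma~\ref{lem:interpolation} to write each coefficient as an average of unimodular multiples of $B$ evaluated on the unit circle, then invoke Lemma~\ref{lem:sumdecay} with $k=2$ for $B_0$ and $k=3$ for $B_{\pm1}$. Your explicit remark that unimodular phases and signs leave singular values unchanged, and that the resulting constant is $\tilde\gamma=\gamma/(1-e^{-\alpha})$ rather than $\gamma$, makes the argument slightly more careful than the paper's terse version.
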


\begin{proof}
 By Lemma~\ref{lem:interpolation}, 
  we have an expression for $B_i$ based on evaluations
  of $B(z)$. In particular, we have
  $
    A_0 = \frac{1}{2} (B(i) + B(-i)), \quad 
    A_{\pm 1} = \frac{1}{3} (\xi^{\mp 1} B(\xi) + \xi^{\mp 5} B(\xi^5) - B(-1)),
  $
  where $\xi$ is a primitive $6$-th root of the unity. Applying Lemma~\ref{lem:sumdecay} completes the proof.
\end{proof}

  \subsection{The Markovian case}
  One of the most interesting application of CR algorithm is in the Markovian framework, in which applicability and convergence properties are guaranteed. In that case, the matrix function $\varphi$ satisfies almost all the hypotheses made in the previous subsections but it is singular at $z=1$ since $1$ is always an eigenvalue of $\varphi(z)$. Nevertheless we will show that Corollary~\ref{cor:decay} can  still be applied considering a rescaled version of $\varphi (z)$.
  
  When the coefficients $A_i$ for $i=-1,0,1$ represent the blocks of the transition  matrix of an irreducible not null recurrent QBD process, the eigenvalues of $\varphi(z)$ enjoy the following properties \cite{Gail,CR}:
  \begin{enumerate}[(i)]

  \item  $
|\lambda_1|\leq  |\lambda_2|\leq\dots\leq |\lambda_{m-1}|\leq \lambda_m<\lambda_{m+1}\leq |\lambda_{m+2}|\leq\dots\leq |\lambda_{2m}|
  $, with $\lambda_m,\lambda_{m+1}\in\mathbb R$ and one of the two equal to $1$.
  \item In the annulus $\{\lambda_m<|z|<\lambda_{m+1}\}$ $\varphi$ is invertible and the spectral radius of $I-\varphi(z)$ is strictly less than $1$.
  \end{enumerate}
  Hence we consider the rescaled version of $\varphi$, that is,
  $
\varphi_{\theta}(z):=\varphi(\theta z)  
  $,
  and we choose $\theta=\sqrt{\lambda_m \lambda_{m+1}}$. We obtain a matrix function invertible on  $\mathbb A=\{\frac{1}{t}<|z|<t\}$ where $t=\sqrt{\frac{\lambda_{m+1}}{\lambda_m}}$. 
  
  Observe that $\varphi_{\alpha}^{(h)}(z):=\varphi^{(h)}(\alpha^{2^h} z)$ so applying CR to  $\varphi_{\alpha}$ one obtains the same matrix sequences up to a rescaling factor. In particular the exponential  decay of the singular values is left unchanged as shown in the following.

    \begin{thm}\label{thm:uniform}
  For given $t>1$ and $\delta\ge 0$, consider the following class of matrix functions associated with QBD stochastic processes with $k$-quasiseparable blocks:
  \[
  \chi_{\delta,t}:=\left\{\varphi(z):\ \norm{\varphi^{-1}(z)}_2\leq \delta \quad 
t^{-1}\le |z|\le t,\quad t<\lambda_{m+1}/\lambda_m\right\}.
  \]
  Then there exists a uniform constant $\gamma(\delta,t)$ such that 
  for any off-diagonal block $C^{(h)}(z)$ of $\varphi^{(h)}(z)$, with  $\varphi\in\chi_{\delta,t}$, its $s$-th singular value is bounded by
  $
\sigma_s(C^{(h)}(z))\leq \gamma(\delta,t)\cdot e^{-\frac{s-3k}{6k}\log t}.  
  $
  \end{thm}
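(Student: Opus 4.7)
The plan is to reduce the Markovian case to Corollary~\ref{cor:decay} via the rescaling $\varphi_\theta(z):=\varphi(\theta z)$, $\theta=\sqrt{\lambda_m\lambda_{m+1}}$, introduced just before the statement. First I would verify that $\varphi_\theta$ satisfies the three hypotheses of Theorem~\ref{thm:decay} on the annulus $\mathbb A(t^{-1},t)$: non-negativity of the rescaled blocks $\theta^{-1}A_{-1}$, $A_0$, $\theta A_1$ is immediate, quasiseparability of rank $k$ is preserved under scalar multiplication, and the norm bound $\|\varphi_\theta^{-1}(z)\|_2\le\delta$ on the annulus is precisely the defining condition of membership in $\chi_{\delta,t}$. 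The spectral radius condition $\rho(I-\varphi_\theta(z))<1$ for $z\in\mathbb T$ is then obtained by the same Perron--Frobenius argument used at the opening of the proof of Theorem~\ref{thm:decay}, now that $\varphi_\theta$ is invertible at $z=1$.

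Next I would apply Corollary~\ref{cor:decay} to $\varphi_\theta$ to obtain
\[
\sigma_s(C_\theta^{(h)}(w))\le 3MK\cdot e^{-\frac{s-3k}{6k}\log t},\qquad w\in\mathbb T,
\]
for every off-diagonal block of $\varphi_\theta^{(h)}$. Using the CR commutation identity $\varphi_\theta^{(h)}(w)=\varphi^{(h)}(\theta^{2^h}w)$ recalled just before the statement, the off-diagonal blocks of $\varphi^{(h)}$ coincide pointwise with those of $\varphi_\theta^{(h)}$ up to the change of variable $z=\theta^{2^h}w$, and the singular-value bound transfers to $\varphi^{(h)}$ verbatim.

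The main obstacle, and precisely what the word \emph{uniform} captures in the statement, is to show that the constants $M$ and $K$ in Corollary~\ref{cor:decay} can be chosen to depend only on $\delta$ and $t$. The factor $1/(1-e^{-N\log t})$ in $M$ is bounded by $1/(1-t^{-1})$ uniformly in $N=2^h\ge 1$; the factor $\|\varphi_\theta(1)^{-1}\|_2=\|\varphi(\theta)^{-1}\|_2$ entering $K$ is bounded by $\delta$ since $\theta\in[t^{-1},t]$ lies in the annulus of invertibility. The delicate remaining piece is the bound $L\ge\|A_i^{(h)}\|_2$: combining the stochastic structure of the original $A_i$ (which yields $\|A_i\|_\infty\le 1$) with the remark following Lemma~\ref{lem:well-conditioning} that the sequences generated by CR are uniformly bounded in $h$ for non-null-recurrent QBDs, one can choose a single $L=L(\delta,t)$ valid for all $h$ and for all $\varphi\in\chi_{\delta,t}$. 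Absorbing these factors into a common constant $\gamma(\delta,t)$ then yields the claimed bound.
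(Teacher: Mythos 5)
Your proposal is correct and follows essentially the same route the paper intends: the paper in fact states Theorem~\ref{thm:uniform} without a written proof, presenting it as an immediate consequence of the rescaling $\varphi_\theta(z)=\varphi(\theta z)$ with $\theta=\sqrt{\lambda_m\lambda_{m+1}}$, the commutation identity $\varphi_\theta^{(h)}(z)=\varphi^{(h)}(\theta^{2^h}z)$, and Corollary~\ref{cor:decay}. Your reconstruction matches this and is, if anything, more careful than the paper on the one genuinely delicate point (uniformity of $M$, $K$ and $L$ in $h$ and over the class $\chi_{\delta,t}$), which the paper only addresses by the informal remark preceding Corollary~\ref{cor:decay}.
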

  
  \begin{rem}\rm
  Observe that in the case of null-recurrent QBD processes one has $\lambda_m=\lambda_{m+1}=1$, so that there is no open annulus including $\mathbb T$ where $\varphi(z)$ is nonsingular and  we cannot apply Theorem \ref{thm:decay}.
  This drawback can be partially overcome by applying the shift technique of \cite{SMC,CR}. This technique allows to construct a new matrix function $\tilde \varphi(z)$ which has the same eigenvalues of $\varphi(z)$ except for the eigenvalue 1 which is shifted to 0. So that $\tilde \varphi(z)$ has an open annulus containing $\mathbb T$ where it is nonsingular. Moreover, applying CR to $\tilde \varphi(z)$ generates matrix sequences which easily allow to recover the corresponding matrix sequences obtained by applying CR to $\varphi(z)$.
The sequences associated with $\varphi(z)$ differ from the sequences associated with $\tilde \varphi(z)$ by a rank-1 correction. This way, if the exponential decay of the singular values holds for the latter sequences, it holds also for the former ones.  
  The difficulty that still remains is that the nonnegativity of the blocks $A_{-1}$, $A_0$ and $A_1$ is not generally satisfied by the function $\tilde \varphi(z)$ so that in principle Theorem \ref{thm:decay} cannot be applied and a different version specific for this case should be formulated.
  \end{rem}
  
  \begin{rem}\rm
  The bounds that we have given to the decay of the singular values of the off-diagonal submatrices are not strict. Experimentally, singular values seem to decay slightly faster. Even in the null-recurrent case where $\lambda_m=\lambda_{m+1}=1$, the decay still occurs even though in a deteriorated form. The decay properties clearly depend on the domain of analyticity of $\psi(z)$ but this is not the only reason of the decay. More investigation is needed in this direction.
  \end{rem}

  \section{An algorithm using $\mathcal{H}$-matrices}
    \label{sec:hmatrices}
    We have provided an implementation of CR, which applies to matrix functions $\varphi(z)$ having quasiseparable blocks, and relies on the approximate quasiseparable structure induced by the decay of the singular values. We relied  on the $\mathcal H$-matrix representation of \cite{borm,borm2003,grasedyck2003}.

  \subsection{$\mathcal{H}$-matrix representation}
  Here, we give a brief and informal description of the $\mathcal{H}$-matrix representation that we have implemented. For full details we refer to \cite{borm} where an overview
  of the definition and use of hierarchical matrices is given.

Let $A\in\mathbb{R}^{n\times n}$ be a $k$-quasiseparable matrix such that
$A=\left[\begin{smallmatrix}A_{11}&A_{22}\\
A_{21}&A_{22}\end{smallmatrix}\right]$, 
$A_{11}\in\mathbb{R}^{n_1\times n_1}$, 
  $A_{22}\in\mathbb{R}^{n_2\times n_2}$,
  with $n_1:=\lfloor \frac{n}{2} \rfloor $ and $n_2:=\lceil \frac{n}{2} \rceil$. 
  Observe that the antidiagonal blocks $A_{12}$ and $A_{21}$ do not involve any element of the main diagonal of $A$, hence they are representable as a sum of at most $k$ dyads.
  Moreover the diagonal blocks $A_{11}$ and $A_{22}$ are square matrices with the same rank structure of $A$. Therefore these diagonal blocks are recursively represented with a similar partitioning. If blocks become small enough, they are stored as full matrices.
  
  \begin{figure}[!ht]
\centering
\includegraphics[width=0.1\textheight,height=0.1\textheight]{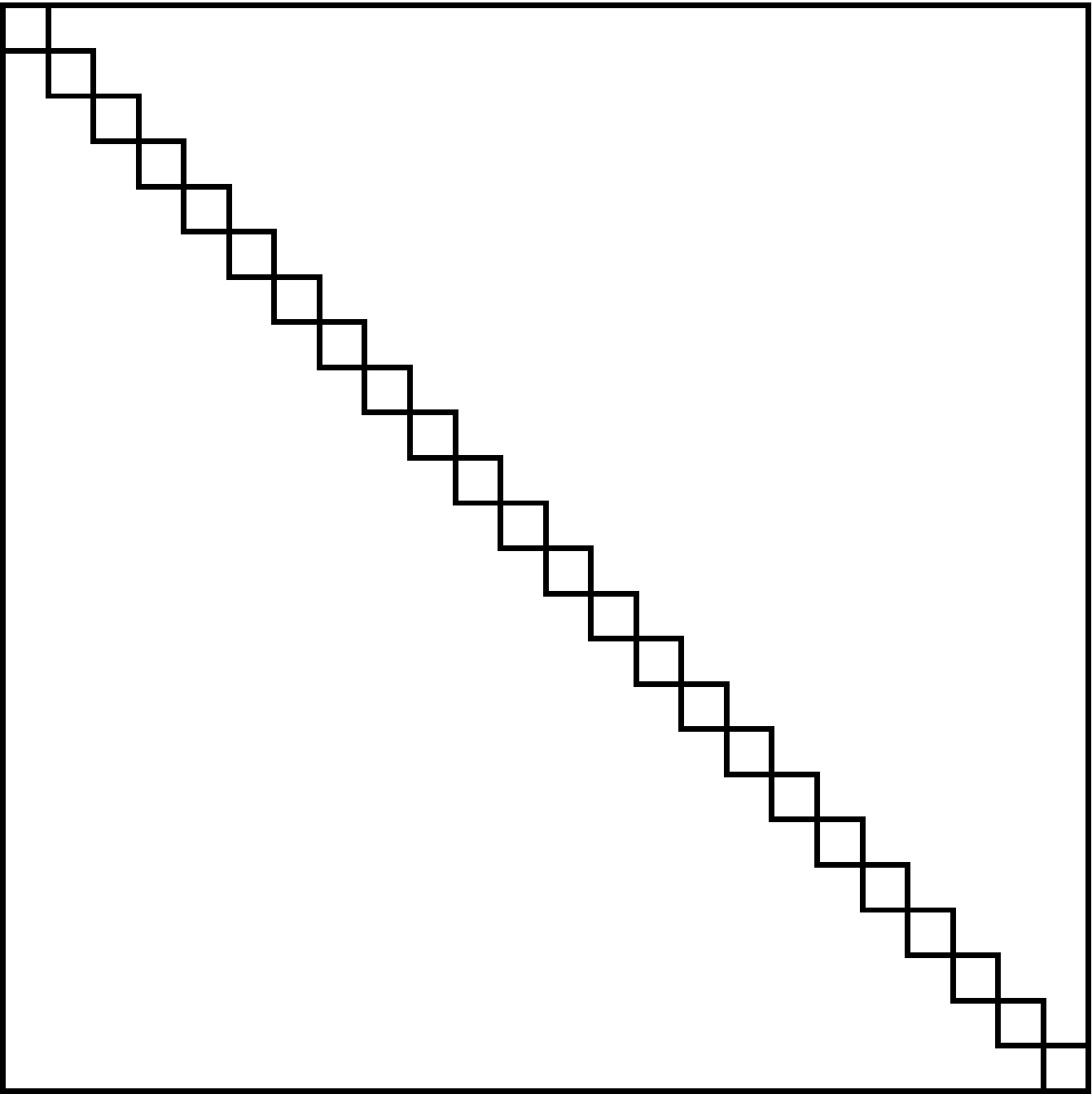}\qquad 
\includegraphics[width=0.1\textheight,height=0.1\textheight]{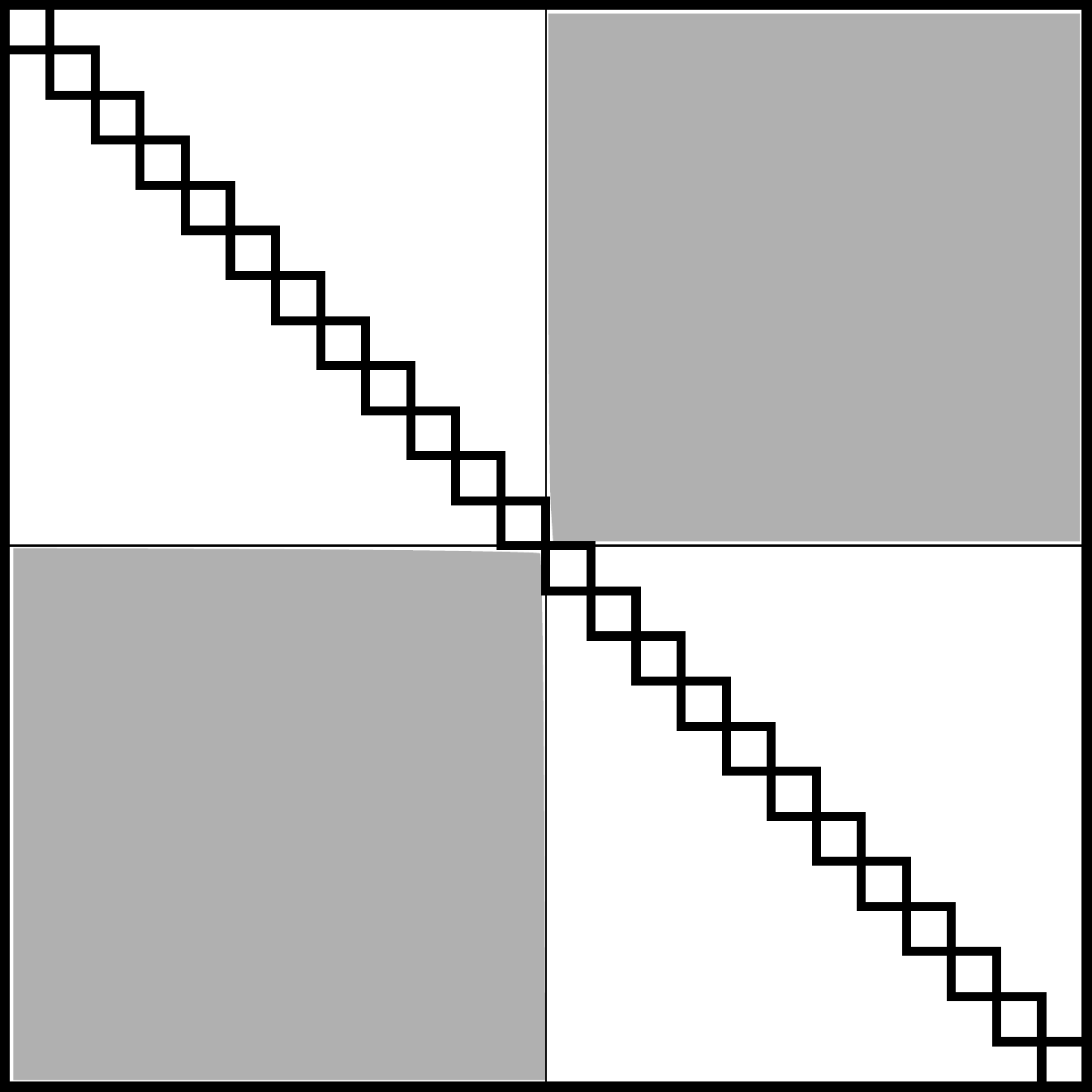}
\qquad 
\includegraphics[width=0.1\textheight,height=0.1\textheight]{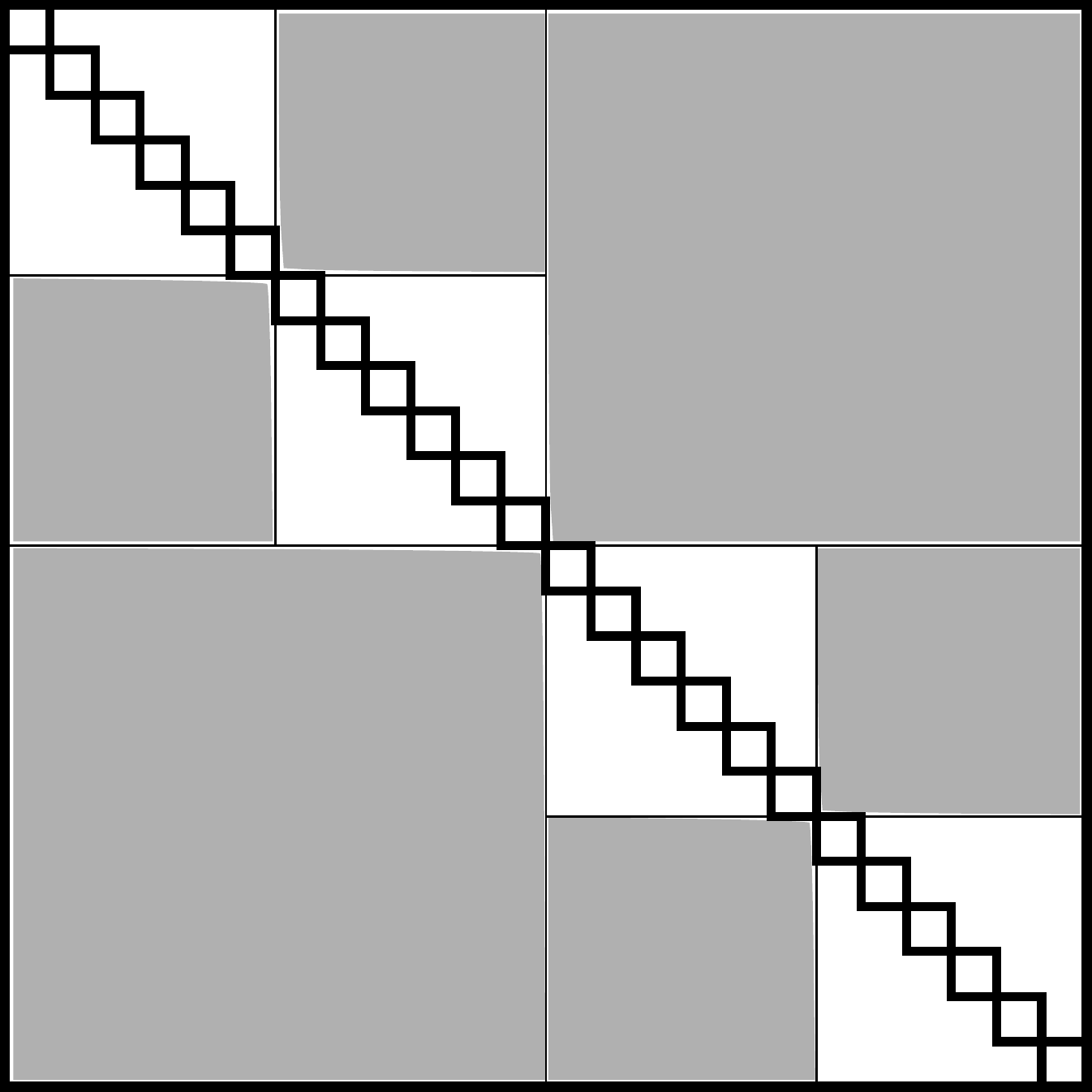}\qquad 
\includegraphics[width=0.1\textheight,height=0.1\textheight]{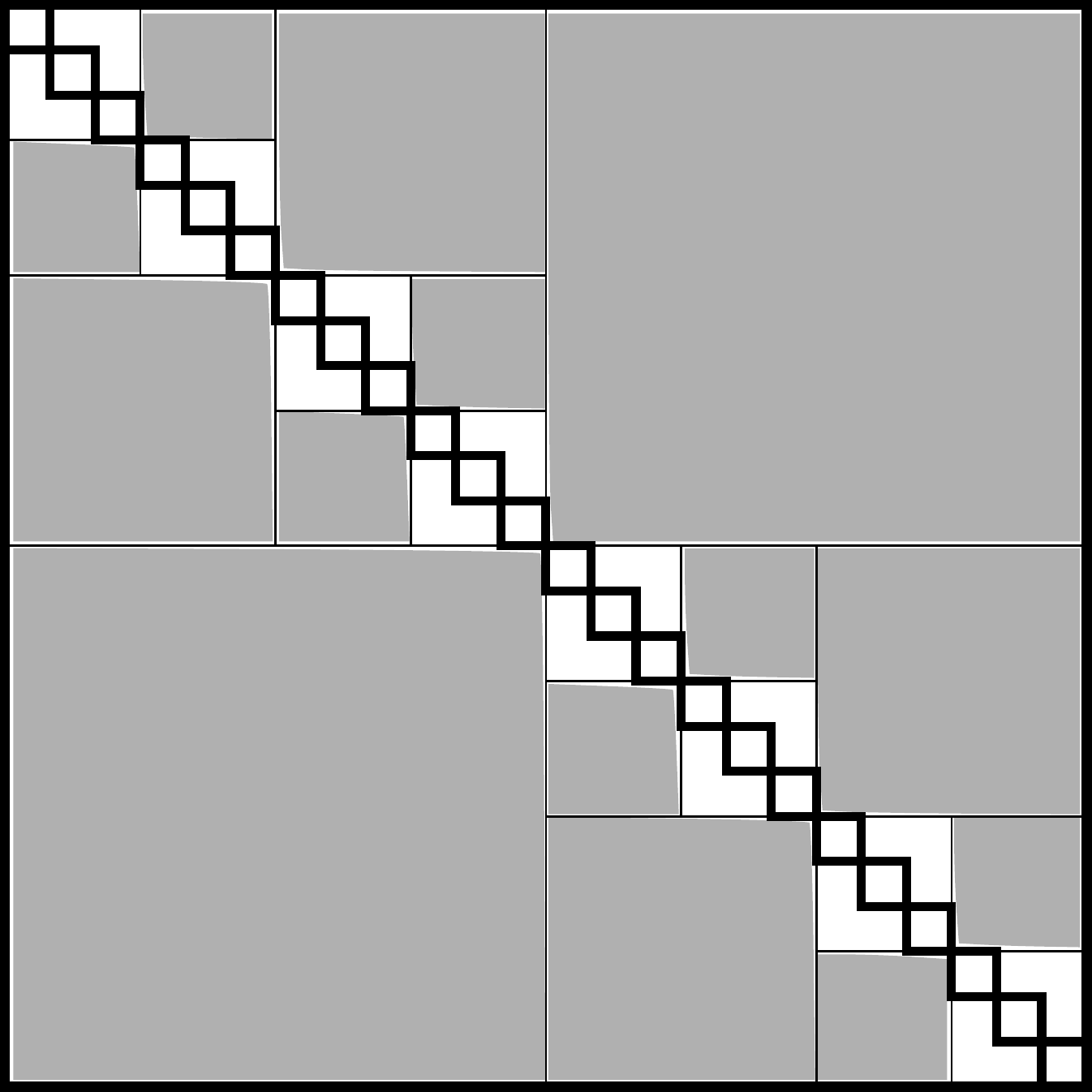}
 \caption{The behavior of the block partitioning in the $\mathcal{H}$-matrix representation. The blocks filled with grey are represented as sum of dyads, the diagonal blocks in the last step are stored as full matrices.}
\end{figure}

 \subsection{Quasiseparable CR}
 If the quasiseparable rank of the $\mathcal{H}$-matrices we are dealing with can be treated as a constant when compared to the dimension $n$, then the algorithms which perform the arithmetic operations have almost linear complexity \cite{borm}[Chapter 6].
 In particular we can achieve complexity $O(n\log n )$ for matrix addition and $O(n\log^2 n)$ for matrix multiplication and inversion. This is almost optimal, provided that the rank remains sufficiently low.
 
 In order to fully exploit the numerical quasiseparable structure we perform the arithmetic operations of CR adaptively with respect to the rank of the blocks. This means that the result of an arithmetic operation 
 (eg. matrix multiplication) will be an $\mathcal{H}$-matrix with the same partitioning, where each low rank block is a truncated reduced SVD of the corresponding block of the exact result. Hence
 the rank is not a priori fixed but depends on a threshold $\epsilon$ at which the truncation is done. The parameter $\epsilon$ can be regarded as the desired accuracy (for us is close to the machine precision $2.22\times 10^{-16}$)
 and can be crucial for the performance of the algorithm. 
 {\small
 \begin{figure}[!ht]
 \begin{center}
 \begin{tikzpicture}
  \begin{loglogaxis}[
     title=Execution time,
     xlabel=Size,
     ylabel=Time (s),
     legend entries={CR, $H_{10^{-16}}$, $H_{10^{-12}}$, $H_{10^{-8}}$},
     xmax=2e6,
     ymax = 1e5,
     width=.46\linewidth,
     height=.3\linewidth
   ]
   
   \addplot table {tempi_CR.dat};
   \addplot table {tempi_H16.dat};
   \addplot table {tempi_H12.dat};
   \addplot table {tempi_H8.dat};
  \end{loglogaxis}

 \end{tikzpicture}
 \begin{tikzpicture}
  \begin{loglogaxis}[
     title=Execution time ,
     xlabel=Band,
     ylabel=Time (s),
     legend entries={CR, $H_{10^{-16}}$, $H_{10^{-12}}$, $H_{10^{-8}}$},
     ymax=1e3,
     xmax=1e4,
     width=.46\linewidth,
     height=.3\linewidth
   ]
   
   \addplot table {tempi_CR_band.dat};
   \addplot table {tempi_H16_band.dat};
   \addplot table {tempi_H12_band.dat};
   \addplot table {tempi_H8_band.dat};
  \end{loglogaxis}

 \end{tikzpicture}
 \end{center}
 
 \caption{Timings of CR. To the left, CR is applied to tridiagonal blocks with increasing size. To right, CR is applied to band blocks with increasing band and size $1600$.} \label{fig:crtimings}
\end{figure}
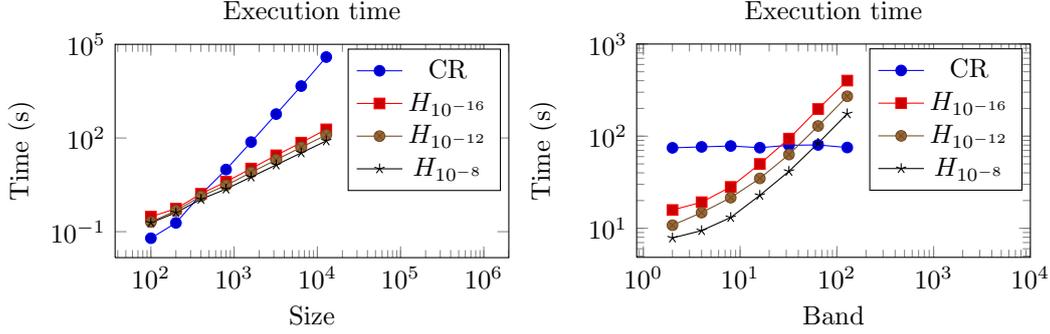
}

\begin{table}
\begin{center}
\rowcolors{1}{Gray}{white}
\resizebox{\textwidth}{!} {
 \footnotesize \begin{tabular}{c|cc|cc|cc|cc}
 \hiderowcolors
   & \multicolumn{2}{c|}{CR} & \multicolumn{2}{c|}{$H_{10^{-16}}$} & \multicolumn{2}{c|}{$H_{10^{-12}}$} & \multicolumn{2}{c}{$H_{10^{-8}}$} \\ \hline
   Size & Time (s) & Residue & Time (s) & Residue & Time (s) & Residue & Time (s) & Residue \\ \hline  
    \showrowcolors
   $100$ & $6.04e-02$ & $1.91e-16$ & $ 2.21e-01$ & $1.79e-15$ & $2.04e-01$ & $8.26e-14$ & $1.92e-01$ & $7.40e-10$ \\ 
$200$ & $1.88e-01$ & $2.51e-16$ & $ 5.78e-01$ & $1.39e-14$ & $5.03e-01$ & $1.01e-13$ & $4.29e-01$ & $2.29e-09$ \\ 
$400$ & $1.61e+01$ & $2.09e-16$ & $ 3.32e+00$ & $1.41e-14$ & $2.60e+00$ & $1.33e-13$ & $1.98e+00$ & $1.99e-09$ \\ 
$800$ & $2.63e+01$ & $2.74e-16$ & $ 4.55e+00$ & $1.94e-14$ & $3.49e+00$ & $2.71e-13$ & $2.63e+00$ & $2.69e-09$ \\ 
$1600$ & $8.12e+01$ & $3.82e-12$ & $ 1.18e+01$ & $3.82e-12$ & $8.78e+00$ & $3.82e-12$ & $6.24e+00$ & $3.39e-09$ \\ 
$3200$ & $6.35e+02$ & $5.46e-08$ & $ 3.12e+01$ & $5.46e-08$ & $2.21e+01$ & $5.46e-08$ & $1.51e+01$ & $5.43e-08$ \\ 
$6400$ & $5.03e+03$ & $3.89e-08$ & $ 7.83e+01$ & $3.89e-08$ & $5.38e+01$ & $3.89e-08$ & $3.58e+01$ & $3.87e-08$ \\ 
$12800$ & $4.06e+04$ & $1.99e-08$ & $ 1.94e+02$ & $1.99e-08$ & $1.29e+02$ & $1.99e-08$ & $8.37e+01$ & $1.97e-08$ \\
 \end{tabular}
}
\end{center}
\caption{Timings and accuracy for $15$ iterations of CR at the increasing of the size of the blocks.}\label{tab:num}
\end{table}
\begin{table}
\begin{center}
\rowcolors{1}{Gray}{white}
\resizebox{\textwidth}{!} {
 \footnotesize \begin{tabular}{c|cc|cc|cc|cc}
 \hiderowcolors
   & \multicolumn{2}{c|}{CR} & \multicolumn{2}{c|}{$H_{10^{-16}}$} & \multicolumn{2}{c|}{$H_{10^{-12}}$} & \multicolumn{2}{c}{$H_{10^{-8}}$} \\ \hline
   Band & Time (s) & Residue & Time (s) & Residue & Time (s) & Residue & Time (s) & Residue \\ \hline  
    \showrowcolors
2 &$ 7.47e+01 $&$ 2.11e-16 $&$  1.58e+01 $&$ 6.95e-15 $&$ 1.08e+01 $&$ 2.62e-13 $&$ 7.86e+00 $&$ 2.57e-09 $\\ 
4 &$ 7.65e+01 $&$ 1.66e-16 $&$  1.92e+01 $&$ 4.88e-15 $&$ 1.48e+01 $&$ 2.36e-13 $&$ 9.44e+00 $&$ 3.15e-09 $\\ 
8 &$ 7.82e+01 $&$ 1.48e-16 $&$  2.81e+01 $&$ 6.11e-15 $&$ 2.15e+01 $&$ 2.08e-13 $&$ 1.31e+01 $&$ 2.10e-09 $\\ 
16 &$ 7.50e+01 $&$ 1.35e-16 $&$  4.99e+01 $&$ 4.98e-15 $&$ 3.48e+01 $&$ 2.29e-13 $&$ 2.28e+01 $&$ 2.08e-09 $\\ 
32 &$ 7.97e+01 $&$ 1.33e-16 $&$  9.40e+01 $&$ 5.79e-15 $&$ 6.32e+01 $&$ 2.01e-13 $&$ 4.15e+01 $&$ 2.28e-09 $\\ 
64 &$ 8.03e+01 $&$ 1.31e-16 $&$  1.97e+02 $&$ 6.79e-15 $&$ 1.29e+02 $&$ 1.99e-13 $&$ 8.37e+01 $&$ 2.01e-09 $\\ 
128 &$ 7.53e+01 $&$ 1.28e-16 $&$  4.01e+02 $&$ 5.89e-15 $&$ 2.71e+02 $&$ 2.02e-13 $&$ 1.75e+02 $&$ 2.15e-09$\\
 \end{tabular}
}
\end{center}
\caption{Timings and accuracy for $15$ iterations of CR on blocks with size $1600$ with different bands.}
\end{table}
\subsection{A note on the implementation}

For the implementation of this algorithm we relied on
the open source library {\tt H2Lib} \cite{h2lib}. The library
has been wrapped in MEX files for use in MATLAB, where the 
numerical experiments have been run. The code 
developed in this context is freely available at \cite{h2lib-matlab}. The bindings developed in the
testing of the algorithm are only a partial mapping 
of all the routines available in the original
{\tt H2Lib} library but we feel that it is worth making
them public so they can be used as a base for a further
extension. 

For a fair comparison, we have compiled {\tt H2Lib} with the LAPACK library used by MATLAB. Moreover, we have disabled the
parallelism in the Intel MKL library to obtain more accurate
results. It is important to notice that running with
parallelism enabled in the MKL library leads to improved
performance both for {\tt H2LIb} and for MATLAB, but the
improvement is more relevant in the latter. This is due to
the fact that the library is optimized for the multiplication
of large matrices, such as in the full CR implementation (when full matrices of large size are multiplied together). The multiplication of the small rectangular matrices involved in
the hierarchical representation, instead, benefit less from
this implementation. Anyway, also in this case we see that
our implementation is more efficient even if starting 
from larger dimension. For example, on a Xeon 
server with 24 threads available our implementation is faster than the standard one
approximately for $n > 500$. 

Table \ref{tab:num} reports the results of some numerical experiments, where in each column we have reported: the size of the blocks from $m=100$ up to $m=12800$, the CPU time, in seconds, required by standard CR and the residual error, then from column 3 to column 5 we reported the CPU time, in seconds, and the residual error of our implementation with values of $\epsilon=10^{-16},\, 10^{-12},\, 10^{-8}$, respectively.  It is interesting to observe that the precision of the result does not deteriorate much for large values of $m$. Moreover, the speed-up that we get goes beyond two order of magnitude.

 In Table $2$ we repeat the experiment fixing the size to $1600$ and letting the band of the starting blocks to increase exponentially from $2$ up to $128$. It should be note that the gain of time of our implementation seems to deteriorate linearly with respect to the increase of the band.

In Figure \ref{fig:crtimings} we give a graphic description, in logarithmic scale, of the growth of the CPU time in the latter experiments. The test problems are generated randomly.

 \section{Conclusions}
   \label{sec:conclusions}
We have experimentally observed the exponential decay of the singular values of certain off-diagonal submatrices generated by cyclic reduction applied to certain QBD stochastic processes of practical interest. We have formally related this property to the width of the domain of analyticity of the inverse matrix function associated with the QBD. 

We have provided a software implementation of CR, for QBD with tridiagonal blocks encountered in the analysis of bidimensional random walk, which relies on this decay property. The speed up that we get with respect to standard CR is substantial even with moderately large size of the blocks.
  
Even though experiments confirm the validity of the  theoretical bounds, the bounds obtained in our analysis are not sharp with respect to the values actually encountered in our computational experiments. This shows that the decay property depends also on other factors which deserve further investigation. This is the aim of our future research.

\bibliographystyle{abbrv}
\bibliography{bibliography}

\begin{thebibliography}{10}

\bibitem{LRCR}
D.~A. Bini, P.~Favati, and B.~Meini.
\newblock A compressed cyclic reduction for qbd processes with low-rank upper
  and lower transitions.
\newblock In {\em Matrix-Analytic Methods in Stochastic Models}, pages 25--40.
  Springer, 2013.

\bibitem{SMC}
D.~A. Bini, G.~Latouche, and B.~Meini.
\newblock {\em Numerical methods for structured Markov chains}.
\newblock Oxford University Press, 2005.

\bibitem{BTCR}
D.~A. Bini and B.~Meini.
\newblock Effective methods for solving banded toeplitz systems.
\newblock {\em SIAM Journal on Matrix Analysis and Applications},
  20(3):700--719, 1999.

\bibitem{CR}
D.~A. Bini and B.~Meini.
\newblock The cyclic reduction algorithm: from poisson equation to stochastic
  processes and beyond.
\newblock {\em Numerical Algorithms}, 51(1):23--60, 2009.

\bibitem{h2lib}
S.~B\"orm.
\newblock H2{L}ib.
\newblock Available from GitHub at \url{https://github.com/H2Lib/H2Lib}, 2015.

\bibitem{borm}
S.~B{\"o}rm, L.~Grasedyck, and W.~Hackbusch.
\newblock Hierarchical matrices.
\newblock {\em Lecture notes}, 21:2003, 2003.

\bibitem{borm2003}
S.~B{\"o}rm, L.~Grasedyck, and W.~Hackbusch.
\newblock Introduction to hierarchical matrices with applications.
\newblock {\em Engineering Analysis with Boundary Elements}, 27(5):405--422,
  2003.

\bibitem{buzbee}
B.~L. Buzbee, G.~H. Golub, and C.~W. Nielson.
\newblock On direct methods for solving poisson's equations.
\newblock {\em SIAM Journal on Numerical analysis}, 7(4):627--656, 1970.

\bibitem{Gail}
H.~Gail, S.~Hantler, and B.~Taylor.
\newblock Matrix-geometric invariant measures for g/m/l type markov chains.
\newblock {\em Communications in statistics. Stochastic models},
  14(3):537--569, 1998.

\bibitem{grasedyck2003}
L.~Grasedyck and W.~Hackbusch.
\newblock Construction and arithmetics of h-matrices.
\newblock {\em Computing}, 70(4):295--334, 2003.

\bibitem{guo2002convergence}
C.-H. Guo.
\newblock Convergence analysis of the latouche--ramaswami algorithm for null
  recurrent quasi-birth-death processes.
\newblock {\em SIAM journal on matrix analysis and applications},
  23(3):744--760, 2002.

\bibitem{henrici}
P.~Henrici.
\newblock Computational complex analysis.
\newblock In {\em Proceedings Symposium Applied Mathematics}, volume~20, pages
  79--86, 1974.

\bibitem{hockney}
R.~W. Hockney.
\newblock A fast direct solution of poisson's equation using fourier analysis.
\newblock {\em Journal of the ACM (JACM)}, 12(1):95--113, 1965.

\bibitem{tandem}
J.~R. Jackson.
\newblock Networks of waiting lines.
\newblock {\em Operations research}, 5(4):518--521, 1957.

\bibitem{h2lib-matlab}
S.~Massei and L.~Robol.
\newblock H2lib's matlab bindings.
\newblock Available from GitHub at \url{https://github.com/robol/h2lib-matlab},
  2015.

\bibitem{Miya}
M.~Miyazawa.
\newblock Tail decay rates in double qbd processes and related reflected random
  walks.
\newblock {\em Mathematics of Operations Research}, 34(3):547--575, 2009.

\bibitem{PerVan}
J.~F. P{\'e}rez and B.~Van~Houdt.
\newblock Quasi-birth-and-death processes with restricted transitions and its
  applications.
\newblock {\em Performance Evaluation}, 68(2):126--141, 2011.

\bibitem{vanbarel:book1}
R.~Vandebril, M.~Van~Barel, and N.~Mastronardi.
\newblock {\em Matrix computations and semiseparable matrices. Linear systems},
  volume~1.
\newblock Johns Hopkins University Press, Baltimore, MD, 2008.

\end{thebibliography}

\end{document}